 \newtheorem{thm}{Theorem}[section]
 \newtheorem{lem}[thm]{Lemma}
 \newtheorem{prop}[thm]{Proposition}
 \theoremstyle{definition}
 \theoremstyle{remark}
 \newtheorem{rem}[thm]{Remark}
 \numberwithin{equation}{section}
\begin{document}

			
\title[Irregular convergence]{Irregular convergence of mild solutions of semilinear equations}

\author[A. Bobrowski]{Adam Bobrowski}

\address{
Lublin University of Technology\\
Nadbystrzycka 38A\\
20-618 Lublin, Poland}

\email{a.bobrowski@pollub.pl}

\author[M. Kunze]{Markus Kunze}
\address{Universit\"at Konstanz, \\Fachbereich Mathematik und Statistik,\\ 78457 Konstanz, Germany}
\email{markus.kunze@uni-konstanz.de}



\newcommand{\cxi}{(\xi_i)_{i\in \N} }
\newcommand{\lam}{\lambda}
\newcommand{\eps}{\varepsilon}
\newcommand{\ud}{\, \mathrm{d}}
\newcommand{\pr}{\mathbb{P}}
\newcommand{\f}{\mathcal{F}}
\newcommand{\s}{\mathcal{S}}
\newcommand{\h}{\mathcal{H}}
\newcommand{\ai}{\mathcal{I}}
\newcommand{\R}{\mathbb{R}}
\newcommand{\C}{\mathbb{C}}
\newcommand{\Z}{\mathbb{Z}}
\newcommand{\N}{\mathbb{N}}
\newcommand{\Y}{\mathbb{Y}}
\newcommand{\e}{\mathrm {e}}
\newcommand{\tif}{\widetilde {f}}
\newcommand{\slam}{\sqrt {\lam}}
\newcommand{\Id}{{\mathrm{Id}}}
\newcommand{\cic}{C_{\mathrm{mp}}}
\newcommand{\cod}{C_{\mathrm{odd}}[0,1]}
\newcommand{\cev}{C_{\mathrm{even}}[0,1]}
\newcommand{\cevr}{C_{\mathrm{even}}(\mathbb{R})}
\newcommand{\codr}{C_{\mathrm{odd}}(\mathbb{R})}
\newcommand{\cez}{C_0(0,1]}
\newcommand{\fod}{f_{\mathrm{odd}}} 
\newcommand{\fev}{f_{\mathrm{even}}} 
\newcommand{\sem}[1]{\mbox{$\left (\e^{t{#1}}\right )_{t \ge 0}$}}
\newcommand{\semi}[1]{\mbox{$\left ({#1}\right )_{t > 0}$}}
\newcommand{\semt}[2]{\mbox{$\left (\e^{t{#1}} \otimes_\varepsilon \e^{t{#2}} \right )_{t \ge 0}$}}
\newcommand{\tr}{\textcolor{red}}
\newcommand{\cea}{C_A}
\newcommand{\ceat}{C_A(t)}
\newcommand{\cosinea}{(\ceat )_{t\in \R}}  
\newcommand{\sea}{S_A}
\newcommand{\seat}{S_A(t)}
\newcommand{\sema}{(\seat )_{t\ge 0}}
\newcommand{\wt}{\widetilde}
\renewcommand{\iff}{if and only if }
\renewcommand{\k}{\mathrm{k}}
\newcommand{\tcm}{\textcolor{magenta}}
\newcommand{\tcb}{\textcolor{blue}}
\newcommand{\dx}{\ \textrm {d} x}
\newcommand{\dy}{\ \textrm {d} y}
\newcommand{\dz}{\ \textrm {d} z}
\newcommand{\di}{\textrm{d}}
\newcommand{\tcg}{\textcolor{green}}
\newcommand{\lc}{\mathfrak L_c}
\newcommand{\ls}{\mathfrak L_s}
\newcommand{\grat}{\lim_{t\to \infty}}
\newcommand{\gra}{\lim_{n\to \infty}}
\newcommand{\grae}{\lim_{\eps \to 0}}
\newcommand{\rez}[1]{\left (\lam - #1\right)^{-1}}
\newcommand{\papa}{\hfill $\square$}
\newcommand{\papap}{\end{proof}}
\newcommand {\x}{\mathbb{X}}
\newcommand{\aex}{A_{\mathrm ex}}
\newcommand{\jcg}[1]{\left ( #1 \right )_{n\ge 1} }
\newcommand {\y}{\mathbb{Y}}
\newcommand{\injtp}{\x \hat \otimes_{\varepsilon} \y}
\newcommand{\pin}{\|_{\varepsilon}}
\newcommand{\mc}{\mathcal}
\newcommand{\inter}{\left [0, 1\right ]}
\newcommand{\lir}{\lim_{r \to 1}}
\newcommand{\ha}{\mathfrak {H}}
\makeatletter
\newcommand{\normt}{\@ifstar\@normts\@normt}
\newcommand{\@normts}[1]{%
  \left|\mkern-1.5mu\left|\mkern-1.5mu\left|
   #1
  \right|\mkern-1.5mu\right|\mkern-1.5mu\right|
}
\newcommand{\@normt}[2][]{%
  \mathopen{#1|\mkern-1.5mu#1|\mkern-1.5mu#1|}
  #2
  \mathclose{#1|\mkern-1.5mu#1|\mkern-1.5mu#1|}
}
\makeatother

\thanks{Version of \today}
\subjclass{ 35K57,47D06, 35B25, 35K58}
 \keywords{semigroups of operators, semi-linear equations, irregular convergence, singular perturbations, boundary conditions, shadow systems, thin layers, signaling pathways.}


\begin{abstract} We prove that even irregular convergence of semigroups of operators implies similar convergence of mild solutions of the related semi-linear equations with Lipschitz continuous nonlinearity. This result is then applied to three models originating from mathematical biology: shadow systems, diffusions on thin layers, and dynamics of neurotransmitters. \end{abstract}

\maketitle

\newcommand{\oper}{\mathfrak R_r}
\newcommand{\opern}{\mathfrak R_{\rn}^\mho}
\newcommand{\brn}{\mbox{$\Delta^\mho_{\rn}$}}
\newcommand{\bro}{\mbox{$\Delta_{\rn}$}}
\newcommand{\rn}{r}
\newcommand{\cern}{C\hspace{-0.07cm}\left[\rn, 1\right ]}
\newcommand{\cernbez}{C\left[\rn, 1\right ]}
\newcommand{\cep}{C\hspace{-0.07cm}\left[ 0, 1\right ]}
\newcommand{\copi}{C[0,\pi]}
\newcommand{\cerec}{C\hspace{-0.07cm} \left ([0,\pi]\times [r,1]\right)}
\newcommand{\cerecbez}{C \left ([0,\pi]\times [r,1]\right)}
\newcommand{\cerecdwa}{C^2\hspace{-0.07cm} \left ([0,\pi]\times [r,1]\right)}
\newcommand{\cerecj}{C\hspace{-0.07cm} \left ([0,\pi]\times \left [0 ,1\right ]\right)}
\newcommand{\xprim}{C_\theta (UR)}
\newcommand{\ie}{i.e., }
\newcommand{\rla}{R_\lambda}
\section{Introduction}\label{intro} 
A number of phenomena of mathematical biology and mathematical physics, such as reaction-diffusion equations \cite{smoller}, can be modelled by a semilinear equation in a Banach space, \ie an equation of the form 
\begin{equation} \label{slin} \frac {\ud u(t)}{\ud t} = A u(t) + F(t, u(t)),  \qquad t \ge 0, \end{equation}
where $A$ is the generator of a strongly continuous semigroup \sem{A} of operators in a Banach space $\x$,  and $F:\R^+ \times \x \to \x$ is a jointly continuous map that is globally Lipschitz continuous in the second variable:  there is an $L>0$ such that for $x,y \in \x$ and $t \ge 0$
\begin{equation}\label{lconst} \| F(t,x) - F(t,y) \| \le L \|x-y\|.\end{equation}
(This assumption may be a bit relaxed, see in particular our examples in Section \ref{przyklady} and consult  \cite{pazy} and/or \cite{knigaz}).
We should point out that it is the nonlinearity $F$ which is responsible for many characteristic phenomena which cannot occur in
a linear equation. As an example, we mention bistability of the system and the existence of homo- and hetero-clinical waves,
which are critical phenomena for signalling pathways in living cells \cite{hat2011,thin}.

This paper is devoted to the question of convergence of mild solutions of a sequence of semilinear equations. Thus, given semigroups $\sem{A_n}$ and $x \in \x$ we are interested in the question of whether the mild solutions to  the equation 
\begin{equation} \label{slinn} \frac {\ud u_n(t)}{\ud t} = A_n u(t) + F(t, u_n(t)),  \qquad u_n(0)=x, t \ge 0, \end{equation}
converge or not. We recall that a mild solution of \eqref{slinn} is a function $u_n$ satisfying
\begin{equation} \label{smild} u_n(t) = \e^{tA_n}x + \int_0^t \e^{(t-s)A_n}F(s,u_n(s))\ud s, \qquad t \ge 0, \end{equation}
and it can be proved using Banach's Fixed-Point Theorem that for each $x$ such a solution exists and is unique (see e.g. \cite{knigaz}, Chapter 36).

In addressing such a question it is sometimes a good strategy to prove convergence of the semigroups $\sem{A_n}$ first, and only then start to worry about the second term in \eqref{smild}. Here the classical convergence theorem for semigroups of operators, often called the Trotter--Kato--Neveu Theorem \cite{knigaz,engel,goldstein,pazy}, comes in handy. This theorem asserts that a sequence of semigroups $\sem{A_n}, n \ge 0,$ converges strongly (\ie for each $x$ in norm of the Banach space $\x$) to a strongly continuous semigroup if and only if: 
\begin{itemize} 
\item [(i)] There exist constants $C$ and $\omega $ such that 
\[ \|\e^{tA_n}\| \le C\e^{\omega t}, \qquad t\ge 0 , n \ge 1.\]
\item [(ii)] For $\lam > \omega $, the resolvents converge (strongly), \ie there exists the limit
\[\rla = \gra \rez{A_n}. \]
\item [(iii)] The closure of the range of the operator $\rla $ is the entire $\x$ for some (and hence for all) $\lam >\omega. $
\end{itemize}

In this case, the operators $R_\lambda, \lam >\omega $ form the resolvent of an operator $A$, \ie $R_\lambda = (\lambda - A)^{-1}$
for $\lambda >\omega$, and $A$ is the generator of the limit semigroup.
We should point out that, if the conditions of the Trotter--Kato--Neveu Theorem are met, the semigroups converge uniformly on the compact subsets of $[0,\infty)$. In this situation,  a straightforward use of the Gronwall lemma implies that the mild solutions of \eqref{smild} converge uniformly on compact subsets of $[0,\infty)$ to mild solutions of \eqref{slin}.

However, this is not the entire story \cite{knigaz}, for quite often, in singular perturbations for example, the third condition in the Trotter--Kato--Neveu Theorem fails. Then, the limit semigroup acts on the regularity space $\x_0$ defined as the closure of the (common) range of $\rla$: 
\begin{equation} \label{xzero} \x_0 = \text{cl\,} Range \rla \end{equation}
 and for $x\in \x_0$ convergence is still uniform on the compact subsets of $[0,\infty)$; such convergence is termed regular, and therefore $\x_0$ is called the regularity space. There are examples showing that for $x\not \in \x_0$, $\gra \e^{tA_n}x$ in general does not exists (see \cite{deg} or \cite{knigaz}, Chapter 7). Nevertheless, in many important cases it does, except that convergence is not uniform on compact subsets of $[0,\infty)$, but merely uniform on compact subsets of $(0,\infty)$ which is to say that it is uniform in each interval of the form $[\tau^{-1}, \tau], \tau >1$ (see \cite{note} or \cite{knigaz}, Chapter 28). Such convergence is termed irregular. 

Thus, in this paper, motivated in particular by biological models presented in our Section \ref{przyklady}, we consider the following situation. 

\begin{itemize} 
\item We assume that the semigroups $\sem{A_n}$ are equibounded, \ie that condition (i) in the Trotter--Kato--Neveu Theorem holds with $\omega =0$. This is a customary simplification in semigroup theory, since the general case may be deduced from this one by considering $A_n' = A_n + \omega I.$ 
 
\item We assume that there is a closed subspace $\x_0$ (the regularity space) and a strongly continuous semigroup \sem{A} in $\x_0$, such that for $x \in \x_0$, \[ \gra \e^{tA_n}x = \e^{tA}x \] uniformly on the compact subsets of $[0,\infty)$. (In fact, $\x_0$ is given by \eqref{xzero}.) 
\item There is also a projection $P: \x \to \x_0$ such that for $x \not \in \x_0$,  \[ \gra \e^{tA_n}x = \e^{tA}Px, \qquad t >0 \] uniformly on the compact subsets of $(0,\infty)$.  
\end{itemize}

We note that in choosing these assumptions, besides motivations of biological origin named above, we were guided by T.G. Kurtz's singular perturbation theorem \cite{knigaz,ethier,kurtzper,kurtzapp}, where assumptions of similar nature are made. 
Our main theorem reads:

\begin{thm}\label{main} Under the stated assumptions, the mild solutions of \eqref{slinn} with initial value $x \in \x$ converge to that of 
\begin{equation} \label{slinp} \frac {\ud u(t)}{\ud t} = A u(t) + PF(t, u(t)),  \qquad t \ge 0, \end{equation}
(note the change in the non-linear term) with initial value $Px$. As in the case of semigroups, for $x\in \x_0$, the convergence is uniform on compact subsets of $[0,\infty)$, and for $x\not \in \x_0$ it is uniform on compact subsets of $(0,\infty)$. \end{thm}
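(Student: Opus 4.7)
The plan is to compare the mild-solution formulas for $u_n$ and $u$ and close the loop with Gronwall's lemma. Writing $u(t)=\e^{tA}Px+\int_0^t\e^{(t-s)A}PF(s,u(s))\ud s$ and subtracting \eqref{smild}, I would decompose
\[
u_n(t)-u(t) = \bigl(\e^{tA_n}x-\e^{tA}Px\bigr) + B_n(t) + C_n(t),
\]
where
\[
B_n(t) = \int_0^t \e^{(t-s)A_n}\bigl[F(s,u_n(s))-F(s,u(s))\bigr]\ud s,
\]
\[
C_n(t) = \int_0^t \bigl[\e^{(t-s)A_n} - \e^{(t-s)A}P\bigr]F(s,u(s))\ud s.
\]
The first parenthesis tends to zero by the standing assumptions on semigroup convergence: uniformly on compact subsets of $[0,\infty)$ when $x\in\x_0$ (so that $Px=x$), and uniformly on compact subsets of $(0,\infty)$ in general. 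Using the equibound $\|\e^{tA_n}\|\le M$ together with \eqref{lconst} one has $\|B_n(t)\|\le ML\int_0^t\|u_n(s)-u(s)\|\ud s$, the ingredient that Gronwall will absorb.

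Before that, I would fix $T>0$ and obtain a uniform-in-$n$ a priori bound $\|u_n(t)\|\le K_T$ on $[0,T]$. Inserting $\|F(s,y)\|\le\|F(s,0)\|+L\|y\|$ in \eqref{smild} and applying scalar Gronwall gives such a bound, and the same reasoning bounds $u$. In particular, since $u$ is continuous, the set $\mathcal K=\{F(s,u(s)):s\in[0,T]\}$ is compact in $\x$, a fact used in the next step.

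The crux of the proof is the vanishing of $C_n$. For each fixed $y\in\x$, the assumption provides $\e^{\sigma A_n}y\to\e^{\sigma A}Py$ uniformly for $\sigma$ in compact subsets of $(0,\infty)$. Combined with the uniform bound $M$, a standard $3\varepsilon$-argument using a finite net of the compact set $\mathcal K$ promotes this to
\[
\sup_{\sigma\in[\delta,T]}\,\sup_{y\in\mathcal K}\bigl\|\bigl[\e^{\sigma A_n}-\e^{\sigma A}P\bigr]y\bigr\|\xrightarrow[n\to\infty]{}0
\]
for every $\delta>0$. Now I split $\int_0^t$ at $t-\delta$: the tail integral over $[\max(t-\delta,0),t]$ is bounded by $\delta(1+\|P\|)M\sup_{s\in[0,T]}\|F(s,u(s))\|$ and so is $O(\delta)$ uniformly in $n$ and $t\in[0,T]$, while the head integral over $[0,t-\delta]$ vanishes uniformly in $t\in[0,T]$ by the display. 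Letting $\delta\to 0$ gives $\|C_n(t)\|\to 0$ uniformly on $[0,T]$.

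Setting $\delta_n(t):=\|\e^{tA_n}x-\e^{tA}Px\|+\|C_n(t)\|$, the inequality $\|u_n(t)-u(t)\|\le\delta_n(t)+ML\int_0^t\|u_n(s)-u(s)\|\ud s$ and Gronwall yield $\|u_n(t)-u(t)\|\le\delta_n(t)+ML\e^{MLT}\int_0^T\delta_n(s)\ud s$. The integral tends to zero by dominated convergence, because $\delta_n$ is uniformly bounded and converges to zero almost everywhere on $[0,T]$, and the pointwise behavior of $\delta_n(t)$ is exactly what we need: uniform convergence to zero on $[0,T]$ when $x\in\x_0$, and uniform convergence on $[\varepsilon,T]$ for every $\varepsilon>0$ in the general case. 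The only delicate step is the treatment of $C_n$: the failure of semigroup convergence at $\sigma=0$ must be absorbed into equiboundedness via the time-splitting just described; everything else is Gronwall bookkeeping.
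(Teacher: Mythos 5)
Your proof is correct, and it takes a genuinely different route from the paper's. You run a direct Gronwall argument: decompose $u_n-u$ into the semigroup error $\e^{tA_n}x-\e^{tA}Px$, the Lipschitz term $B_n$ that Gronwall absorbs, and the \enquote{source} error $C_n$; then show $C_n\to 0$ uniformly on $[0,T]$ by exploiting compactness of $\mathcal K=\{F(s,u(s)):s\in[0,T]\}$, a three--epsilon upgrade of pointwise-to-compactly-uniform semigroup convergence, and a time-splitting near the upper endpoint that hides the failure of convergence at $\sigma=0$ behind equiboundedness. The paper instead deliberately avoids Gronwall and runs a two-step fixed-point scheme: it first proves convergence of the fixed points of the maps $\Phi_n$ in a weighted $L^1((0,\tau),\x)$ space via a \enquote{folk} lemma (uniform Lipschitz constant plus pointwise convergence of the contractions implies convergence of fixed points), and only afterwards upgrades to uniform convergence on compact subsets of $(0,\infty)$ using the same compactness/three-epsilon device you use for $C_n$. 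The reason the paper does not use the $C([0,T],\x)$ sup-norm for the fixed-point convergence is precisely the defect you handled by hand: the maps $\Phi_n$ do not converge uniformly near $t=0$, so the folk lemma is inapplicable there; your Gronwall route sidesteps this by never asking the error term $\delta_n$ to vanish uniformly up to $t=0$, only that it vanish pointwise, be dominated, and (after the $C_n$ splitting) vanish uniformly on $[\varepsilon,T]$. What the paper's route buys is generality: its proof of Theorem~\ref{mainprim} covers semigroups that are merely strongly continuous on $(0,\infty)$ (so $u$ may fail to be continuous at $0$ and your $\mathcal K$ need not be compact), and it gets existence/uniqueness of mild solutions in that wider setting for free from the same fixed-point framework. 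For the statement you are asked to prove — Theorem~\ref{main}, where $\sem{A}$ is strongly continuous on $\x_0$ and hence $u$ is continuous at $0$ — your argument is sound and more elementary.
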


It should perhaps be remarked here that the theorem applies in particular to the situation where 
\( F(t,x) = B(t)x \)
and $t \mapsto B(t)$ is a strongly continuous family of equibounded operators. In the yet more concrete case where $B$ does not depend on time $t$, this theorem has been obtained in \cite{knigaz}*{Chapter 29} and its implications for the so-called Stein model of neuronal activity were discussed in Chapter 30 there.

The formulation of our main result presented above is best suited for the motivating models but, remarkably, our proof never uses the fact that the semigroups involved are strongly continuous at $t=0$. We can thus consider equibounded, but perhaps not continuous at $t=0$,  semigroups
$\semi{T_n(t)}$, $n\ge 1$ and $\semi{T(t)}$. In order to generalize the variation of constants formula \eqref{smild} to this 
setting, strong Bochner measurability of the orbits seems a minimal assumption. As is well known (see \cite[Theorem 10.2.3]{hp})
this already implies strong continuity on $(0,\infty)$. We will thus assume that all semigroups involved have orbits which are continuous on $(0,\infty)$. As we shall see, 
for each $x \in \x$, $n \ge 1$ there exists precisely one continuous function $u_n: (0,\infty) \to \x$ of at most exponential growth, bounded on $(0,1]$,  and such that 
\begin{equation}\label{ueny} u_n (t) = T_n(t)x + \int_0^t T_n(t-s)F(s,u_n(s))\ud s , \qquad t >0,  \end{equation}
and of course the same may be said of existence and uniqueness of $u$ satisfying
\begin{equation}\label{uu} u(t) = T(t)x + \int_0^t T(t-s)F(s,u(s))\ud s , \qquad t >0.  \end{equation}
In this context our main theorem may be rephrased as follows.

\begin{thm}\label{mainprim} Suppose 
\begin{equation}\label{jedynka} \gra T_n(t) x = T(t) x , \qquad x \in \x, t >0.\end{equation}
Then $u_n$ converge to $u$ uniformly on the compact subsets of $(0,\infty)$. Additionally, if $T(0)$ and $T_n(0)$ are defined for all $n$ and $x$ is such that \eqref{jedynka} is also true for $t=0$ and the limit is uniform on the compact subsets of $[0,\infty)$, 
then also is the corresponding mild solutions $u_n$ converge uniformly on the compact subsets of $[0,\infty)$ to $u$. \end{thm}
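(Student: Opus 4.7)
I plan to combine a Banach fixed point construction of the mild solutions with a Gronwall estimate that isolates a single convolution term; the convolution is then handled by a step-function approximation that decouples the $t$-dependence of the integration limit from the $t$-dependence of the operator.

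\textit{Existence, uniqueness and a priori bounds.} For each $n$, Banach's contraction principle applied on the space of continuous functions $(0,T]\to\x$ bounded on $(0,1]$ and of at most exponential growth, with the weighted norm $\|v\|_\gamma := \sup_{t\in(0,T]}\e^{-\gamma t}\|v(t)\|$ (with $\gamma$ large), produces the unique $u_n$ satisfying \eqref{ueny}; equiboundedness $\|T_n(t)\|\le C$ and \eqref{lconst} give the contraction, and the same argument yields $u$. Gronwall applied to \eqref{ueny} then gives an $n$-independent bound $K_b := \sup_n\sup_{s\in(0,b]}\|u_n(s)\|<\infty$ for each $b>0$, and hence an $n$-uniform bound on $\|F(s,u_n(s))\|$ on $(0,b]$.

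\textit{Error identity and Gronwall.} Subtracting \eqref{uu} from \eqref{ueny} yields $\|u_n(t)-u(t)\| \le \alpha_n(t) + CL\int_0^t \|u_n(s)-u(s)\|\ud s$, where $\alpha_n(t) := \|T_n(t)x-T(t)x\| + \|J_n(t)\|$ and $J_n(t) := \int_0^t [T_n(t-s)-T(t-s)]F(s,u(s))\ud s$. Gronwall then gives
\begin{equation*}
\|u_n(t)-u(t)\| \le \alpha_n(t) + CL\int_0^t \alpha_n(s)\e^{CL(t-s)}\ud s,
\end{equation*}
and since $\alpha_n$ is uniformly bounded on $[0,b]$ and converges pointwise to $0$ on $(0,\infty)$, dominated convergence absorbs the Gronwall integral uniformly in $t\in[0,b]$. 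The task reduces to showing that $\alpha_n\to 0$ uniformly on compact subsets of $(0,\infty)$ (respectively of $[0,\infty)$ under the stronger hypothesis).

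\textit{The convolution $J_n$.} The function $\varphi(s):=F(s,u(s))$ is continuous on $(0,b]$ and bounded by some $M$. For $\eta>0$, the contribution of $s\in[0,\eta]$ to $J_n(t)$ is at most $2CM\eta$, uniformly in $t\in[0,b]$ and $n$. On the compact set $[\eta,b]$ the uniformly continuous $\varphi$ is approximated in sup norm within $\eps$ by a simple function $\varphi_\eps = \sum_k y_k \mathbf{1}_{I_k}$; the resulting error in $J_n(t)$ is at most $2Cb\eps$, uniformly in $t$. For the simple-function part, each summand reduces, via the change of variable $r=t-s$, to an expression of the form $G_n^{y_k}(t-c_k(t)) - G_n^{y_k}(t-d_k(t))$, with $G_n^y(r) := \int_0^r [T_n(\tau)-T(\tau)]\,y\ud\tau$. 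Each $G_n^y$ is $2C\|y\|$-Lipschitz uniformly in $n$ and, by dominated convergence, converges pointwise to $0$; equicontinuity together with pointwise convergence forces uniform convergence of $G_n^y$ on $[0,b]$. A three-$\eps$ argument yields $J_n\to 0$ uniformly in $t\in[0,b]$.

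\textit{Conclusion and main obstacle.} Combined with the convergence of $T_n(t)x$ to $T(t)x$ (uniform on compact subsets of $(0,\infty)$ for the first statement, of $[0,\infty)$ for the second), the previous step gives $\alpha_n\to 0$ in the required sense, and the Gronwall bound closes the argument. The main obstacle is the uniform control of $J_n$: the integrand is singular at $s=t$ and the integration domain depends on $t$, so a direct dominated-convergence argument does not give uniformity in $t$; the step-function/primitive decomposition is the essential device for separating these two dependences.
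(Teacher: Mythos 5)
Your proof is correct, but it follows a genuinely different route from the one in the paper, at two separate places. First, instead of the Gronwall estimate you set up after subtracting \eqref{ueny} from \eqref{uu}, the paper invokes a general fixed-point convergence lemma (Proposition \ref{folk}: if contractions $\Phi_n$ with a common constant $q<1$ converge pointwise to $\Phi$, their fixed points converge to that of $\Phi$), applied not in a space of continuous functions but in $L^1\big((0,\tau),\x\big)$ with a Bielecki weight. The point of working in $L^1$ is precisely the one you identify as the ``main obstacle'': the fixed-point maps do \emph{not} converge in a sup-norm on $[0,\tau]$ when $x\notin\x_0$, but they do in $L^1$ by dominated convergence, with no uniformity issue. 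The $L^1$ convergence of $u_n$ is then upgraded to locally uniform convergence in a second step, which reduces---as in your argument---to showing that the convolution $I_n(t)=\int_0^t\|[T_n(t-s)-T(t-s)]F(s,u(s))\|\dd s$ tends to $0$ uniformly in $t\in[0,\tau]$. Second, for this last step the two arguments diverge again: you approximate $s\mapsto F(s,u(s))$ by a simple function and reduce to the primitives $G_n^y(r)=\int_0^r[T_n(\tau)-T(\tau)]y\,\dd\tau$, whose equi-Lipschitz property plus pointwise convergence gives uniform convergence; the paper instead splits $[0,t]$ into $[0,\delta]\cup[\delta,t-\delta]\cup[t-\delta,t]$, bounds the end pieces by domination, and on the middle piece observes that $K_\delta=\{F(s,u(s)):s\in[\delta,\tau]\}$ is compact, so that equiboundedness and a three-epsilon argument give $\sup_{y\in K_\delta}\sup_{r\in[\delta,\tau]}\|T_n(r)y-T(r)y\|\to0$. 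Both devices serve the same purpose of decoupling the two $t$-dependences you point to; the compactness argument is perhaps slicker and uses no approximation by step functions, while your primitive trick is more elementary and self-contained. Your Gronwall route also quietly recovers the paper's $L^1$ conclusion as a by-product (integrating your pointwise estimate), whereas the paper's route emphasizes that the phenomenon is an instance of the general principle that fixed points of converging contractions converge.
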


We then obtain Theorem \ref{main} as a direct consequence of Theorem \ref{mainprim}, setting $T_n(t)=\e^{tA_n},$ and $T(t) \coloneqq \e^{tA}P$.
The latter theorem will be proved in Section \ref{dowod}. The remaining part of the paper is devoted to applications, including shadow systems, diffusion in a thin layer between two parallel planes, and modeling activity of fast neurotransmitters:
these are presented in Section \ref{przyklady}.

\section{Proof of Theorem \ref{mainprim}}\label{dowod}

\newcommand{\bcl}{\mbox{$BC_\lam(\R^+_0,\x)$ }}
\newcommand{\elj}{\mbox{$L^1\left ((0,\tau),\x \right)$ }}
\newcommand{\wroc}{\hspace{-0.1cm}}

Whenever we have a sequence of equations that can be solved by means of the Banach Fixed-Point Theorem, there 
is a `folklore recipe' to establish convergence of the solutions. All one needs to establish is \begin{itemize} 
\item [(i) ] a uniform bound
on the Lipschitz constants of the fixed-point maps less than 1 and 
\item [(ii) ] pointwise convergence of the fixed-point maps;
\end{itemize} 
see Section \ref{sect.l1}, Proposition \ref{folk}, below for a more precise statement.
While this general principle does not appear so often in the study of deterministic equations (where it seems that 
the use of Gronwall's Lemma is preferred), it is used quite often in the context of stochastic equations, see 
\cite{b97, mpr10, kvn11}.

As so often, the main difficulty in following the path described above is the choice of the `right' underlying space. This is also the case in our situation. While, under assumption of strong continuity of all approximating semigroups,  it is possible to use a space of continuous functions on $[0,\infty)$ to solve
a \emph{single} equation, there is no hope that this space will be suitable for establishing a convergence result,
as the maps involved do not converge (uniformly on compact subsets of $[0,\infty)$). Thus, to prove
Theorem \ref{mainprim}, we proceed in two steps. First, rather than a space of continuous functions, we consider a space of 
integrable functions as our underlying space and establish convergence of our solutions in the $L^1$-sense. It is only afterwards that
this result is refined to uniform convergence on compact subsets of $(0,\infty)$. In Section \ref{uniform}, we will see in fact that while convergence of $T_n(t)x $ to $T(t)x$ may perhaps be non-uniform in the vicinity of $t=0$, the other term in \eqref{ueny} converges uniformly in any compact subset of $[0,\infty)$.

\subsection{Existence of the functions \texorpdfstring{$u$}{} and \texorpdfstring{$u_n$}{}}\label{existence}

In this section, given $x\in \x$, we establish existence and uniqueness of $u$ satisfying \eqref{uu} (in the sense specified below); this argument certainly applies for $u_n$, as well. We focus on global existence and for this we assume additionally that there are non-negative constants $C_0$ and $\lam_0$  such that 
\begin{equation} \label{exp} \int_0^t \|F(s,0)\| \ud s \le C_0\e^{\lam_0 t}, \qquad t > 0; \end{equation}
we stress that for local existence, \ie for existence on finite intervals this assumption is not necessary and that it is automatically satisfied if $F$ does not depend on $t$ (which is the case in all our examples). Recall that besides Lipschitz continuity described in  \eqref{lconst} we assume that for certain $C$,
\begin{equation} \|T_n(t)  \| \le C, \qquad t >0 \label{wspol} \end{equation}
and thus also $\|T(t)\| \le C$ for $t>0$.

For $\lam \ge \lam_0$, let \bcl be the space of continuous functions $v$ on $\R^+_0$ with values in $\x$, which are bounded on $(0,1]$ and are of at most exponential growth $\e^{\lambda t}$, \ie satisfy 
\begin{equation}\label{vlam} \|v\|_\lam := \sup_{t >0 } \e^{-\lambda t} \| v(t) \| < \infty .\end{equation} 
When equipped with the norm $\|\cdot \|_\lam $, \bcl is a Banach space. (This type of norm has been first introduced by Adam Bielecki, see \cite{abielecki,edwards}.)
Next, for $v \in$ \bcl \hspace{-0.1cm}, we introduce 
\[ [\Phi (v)](t) = T(t)x + \int_0^t T(t-s)F(s,v(s))\ud s ,\qquad  t \ge 0.\]
It is clear that $\Phi (v) $ is a continuous function on $(0,\infty)$ which on $(0,1]$ is bounded by
\begin{align*} 
&\phantom{\le }\, \, \, C\|x\| + \int_0^t\|T(t-s)[F(s,v(s)) - F(s,0)]\| \ud s + \int_0^t\|T(t-s)F(s,0)]\| \ud s \\
&\le C\|x\| + C \int_0^t\|F(s,v(s)) - F(s,0)\| \ud s +  C\int_0^t\|F(s,0)\| \ud s \\
&\le C\|x\| + C L \max_{s \in (0,1]} \|v(s)\| + C\int_0^1 \|F(s,0)\| \ud s < \infty; 
\end{align*}
the last integral being finite since $F$ is assumed to be jointly continuous.  Also, taking another element of \bcl\hspace{-0.1cm}, say $w$, we have for $t>0$,
\begin{align} \e^{-\lam t}  \|[\Phi (v)] (t) -  [\Phi (w)](t)\| &\le \e^{-\lam t} \int_0^t\|T(t-s)[F(s,v(s)) - F(s,w(s))]\| \ud s \nonumber \\
&\le CL \int_0^t \e^{-\lam (t-s)} \e^{-\lam s} \|v(s) - w(s)\| \ud s \nonumber  \\
&\le CL \int_0^t \e^{-\lam (t-s)}  \ud s   \|v - w\|_\lam \nonumber  \\
& < \frac {CL}\lam \|v - w\|_\lam. \label{rachuny}
 \end{align}
For $w=0$, we have
\[ \|[\Phi (w)] (t)\| \le C\|x\| + C \int_0^t\|F(s,0)\| \ud s, \] 
proving, by assumption \eqref{exp}, that $\Phi (0)$ belongs to all \bcl \hspace{-0.1cm}, $\lam \ge \lam_0 $. 
Therefore, \eqref{rachuny} with  $w=0$ shows that $\Phi (v) $ belongs to \bcl as well: $\Phi$  maps \bcl into itself.  Moreover,  taking the supremum over $t >0$ we obtain 
\[ \|\Phi (v) - \Phi (w)  \|_\lam \le \frac {CL}\lam \|v - w \|_\lam   .\]
It follows that in the spaces \bcl with $\lam > CL $ (and $\lam \ge \lam_0$), $\Phi$ is a contraction. The Banach Fixed-Point Theorem shows now existence and uniqueness of a fixed-point of $\Phi$, \ie existence and uniqueness of a $u$ satisfying \eqref{uu}. 

One may wonder if the solution thus obtained is also locally  unique, but such uniqueness can be ascertained by the same calculation in the space $BC\left ((0,\tau],\x \right ), \tau >0$ of bounded, continuous functions on $(0,\tau] $ with values in $\x$ equipped with the Bielecki-type norm 
\[ \|v\|_\lam = \sup_{t \in (0,\tau]} \e^{-\lam t} \|v(t)\| . \]
We stress again that for existence (and uniqueness) of local solutions assumption \eqref{exp} is not necessary.

\subsection{Convergence in the \texorpdfstring{$L^1$}{} sense}\label{sect.l1}

Our goal in this subsection is to prove that for any $\tau >0$
\begin{equation}\label{l1} \gra \int_0^\tau \|u_n(s) - u(s) \| \ud s .\end{equation}
We will use the following `folk wisdom' theorem. 

\begin{prop} \label{folk} Let $(S,d)$ be a complete metric space and suppose that maps $\Phi_n: S\to S$, $n \ge 1$ are Lipschitz continuous with the same Lipschitz constant $q \in (0,1)$, \ie for all $s,s'\in S$ and $n\ge 1$,
\begin{equation}\label{g1} d(\Phi_n(s),\Phi_n(s'))\le q \, d (s,s'). \end{equation}
Assume also that for each $s \in S$ the limit
\begin{equation}\label{g2} \Phi(s) := \gra \Phi_n(s) \end{equation}
exists. Then the unique fixed-points $s_n^*$ of maps $\Phi_n$ (which exist by the Banach Fixed-Point Theorem) converge to an $s^*\in S$ which is the unique fixed-point of the map $\Phi$.
 \end{prop}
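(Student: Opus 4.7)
The plan is to first establish that the limit map $\Phi$ is itself a contraction with the same constant $q$, so that it has a unique fixed point $s^*$ by the Banach Fixed-Point Theorem, and then to estimate $d(s_n^*,s^*)$ directly using the contraction property of $\Phi_n$ plus the pointwise convergence hypothesis evaluated at the single point $s^*$.

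For the first step, I would fix $s,s'\in S$, pass to the limit $n\to\infty$ in the inequality $d(\Phi_n(s),\Phi_n(s'))\le q\,d(s,s')$, and use continuity of the metric together with the definition \eqref{g2} of $\Phi(s)$ and $\Phi(s')$ to obtain $d(\Phi(s),\Phi(s'))\le q\,d(s,s')$. Since $q\in(0,1)$ and $(S,d)$ is complete, the Banach Fixed-Point Theorem provides a unique $s^*\in S$ with $\Phi(s^*)=s^*$.

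For the second step, I would use the identities $s_n^*=\Phi_n(s_n^*)$ and $s^*=\Phi(s^*)$ together with the triangle inequality to write
\[
d(s_n^*,s^*)=d(\Phi_n(s_n^*),\Phi(s^*))\le d(\Phi_n(s_n^*),\Phi_n(s^*))+d(\Phi_n(s^*),\Phi(s^*))\le q\,d(s_n^*,s^*)+d(\Phi_n(s^*),\Phi(s^*)).
\]
Rearranging gives $(1-q)\,d(s_n^*,s^*)\le d(\Phi_n(s^*),\Phi(s^*))$, and the right-hand side tends to $0$ by \eqref{g2} applied at the single point $s=s^*$. Dividing by the strictly positive number $1-q$ yields $s_n^*\to s^*$.

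There is essentially no serious obstacle: the argument is standard, and the only thing one must be careful about is that pointwise (not uniform) convergence of the $\Phi_n$ is enough, precisely because the difference $\Phi_n-\Phi$ only ever needs to be evaluated at the fixed single point $s^*$, while the discrepancy of arguments $s_n^*$ versus $s^*$ is absorbed by the uniform contractivity constant $q$.
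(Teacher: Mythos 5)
Your argument is correct and coincides with the paper's first proof (``Proof by calculation''): both establish that $\Phi$ is a $q$-contraction by passing to the limit in \eqref{g1}, and both derive $(1-q)\,d(s_n^*,s^*)\le d(\Phi_n(s^*),\Phi(s^*))$ via the triangle inequality through the intermediate point $\Phi_n(s^*)$. The paper additionally offers a second, ``proof by inspection'' via the space of convergent sequences, but your route matches the first one exactly.
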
 

We present two proofs of this result; the first is by straightforward calculation, and the second is almost by inspection. The advantage of the latter, though a bit longer, argument is that it shows that the proposition is a direct consequence of the Banach Fixed-Point Theorem. 

\begin{proof}[Proof by calculation] Combing \eqref{g1} and \eqref{g2} we see that $\Phi$ is Lipschitz continuous with Lipschitz constant $q$, and thus possesses  precisely one fixed-point. Call it $s^*$. We have 
\begin{align*} 
d(s^*,s_n^*) &\le d (s^*,\Phi_n(s^*)) + d(\Phi_n(s^*),s_n^*) \\
&=  d (\Phi(s^*),\Phi_n(s^*)) + d(\Phi_n(s^*),\Phi_n(s_n^*)) \\
&\le   d (\Phi(s^*),\Phi_n(s^*)) + q \, d(s^*,s_n^*).\end{align*}
Thus, 
\[ d(s^*,s_n^*) \le \frac 1{1-q} d (\Phi(s^*),\Phi_n(s^*)),\]
and the right-hand side converges to $0$, by assumption. \end{proof}

\begin{proof}[Proof by inspection] Let $c(S)$ be the space of convergent sequences of elements of $S$; this is a complete metric space with metric given by \[d_{\text{sup}} (\jcg{s_n},\jcg{s_n'} ) = \sup_{n \ge 1} d(s_n,s_n').\] If $\jcg{s_n}$ converges in $S$ then, by \eqref{g1} and \eqref{g2}, so does $\jcg{\Phi_n(s_n)}$ and we have 
\begin{equation}\label{g3} \gra \Phi_n(s_n) = \Phi (\gra s_n ). \end{equation}
Therefore, $\jcg{s_n} \mapsto \jcg{\Phi_n(s_n)} $ defines a map, call it $\Psi$, in $c(S)$, and \eqref{g1} shows that $\Psi$ is Lipschitz continuous with the constant $q$. The Banach Fixed-Point Theorem asserts now that $\Psi$ has a unique fixed-point. By the very definition it is clear that the $n$-th coordinate of this fixed-point is $s_n^*$, \ie we have 
\[ \jcg{\Phi_n(s_n^*)} = \Psi \jcg{s_n^*} = \jcg{s_n^*} .\] 
The limit $s^*=\gra s_n^*$ exists since the fixed-point belongs to $c(S)$. Using \eqref{g3} we obtain \( \Phi(s^*) = s^*\), proving that $s^*$ is a fixed-point of $\Phi$. On the other hand, combing \eqref{g1} and \eqref{g2} we see that $\Phi$ is Lipschitz continuous with Lipschitz constant $q$, and thus possesses  precisely one fixed-point. \end{proof}

Returning to the proof of \eqref{l1}, we fix $\tau>0$ and consider the space \elj of (equivalence classes of) Bochner measurable functions on $(0,\tau)$ with values in $\x$ that are integrable on this interval. This is a Banach space when equipped with the norm \[ \|v\|_{L^1} =\int_0^\tau \|v(s)\| \ud s \] but for us it will be convenient to work with the equivalent Bielecki-type norm 
\begin{equation}\label{btvl} \|v\|_\lam = \sup_{t \in (0,\tau]} \e^{-\lam t} \int_0^t \|v(s)\| \ud s \end{equation} 
where $\lam >CL$. For $v \in \elj$let 
\[ [\Phi_n (v)] (t) = T_n(t)x + \int_0^t T_n(t-s)F(s,v(s))\ud s , \qquad t \in (0,\tau);\] $\Phi_n (v)$ is a measurable function. Since 
\[ \|[\Phi_n (0)] (t) \| \le C\|x\| + C\int_0^t \|F(s,0)\| \ud s \le C\|x\| + C\int_0^\tau \|F(s,0)\| \ud s , \]
$\Phi_n (0)$ is a bounded function, and thus a member of \elj \hspace{-0.1cm}. Also, for $w \in \elj$and $t >0$,
\begin{align} 
\nonumber
\int_0^t \|[\Phi_n(v)] (s) &- [\Phi_n (w)](s)\|\ud s \\ &\le 
\int_0^t \int_0^s \left \|T_n(s-u) [F(u,v(u)) - F(u,w(u))] \right \| \ud u \ud s \nonumber \\
&\le CL\int_0^t \int_0^s \left \|v(u) - w(u)\right \| \ud u \ud s \label{achun}. 
\end{align} 
Since for $w=0$ the right-hand side does not exceed $CL\tau \|v\|_{L^1}$, $\Phi_n (v)$ is a member of \elj \hspace{-0.1cm}. Thus $\Phi_n$  maps \elj into itself. 
Multiplying the right-most and left-most members of \eqref{achun} by $\e^{-\lam t}$ we see also that
\[ \|\Phi_n (v) - \Phi_n(w)\|_\lam \le CL \sup_{t\in (0,\tau]} \int_0^t \e^{-\lam (t-s)}\ud s \|v-w\|_\lam < \frac {CL}{\lam } \|v-w\|_\lam .\] 
It follows that the maps $\Phi_n$ are Lipschitz continuous with the same Lipschitz constant $\frac {CL}{\lam} \in (0,1)$ as in our Proposition \ref{folk}. In particular, each of them has a unique 
fixed-point. This fix point, say $u_n$, satisfies \eqref{ueny} almost surely for $t\in (0,\tau].$ Since from the previous subsection we know that solutions of \eqref{ueny} are unique in $BC\left ((0,\tau],\x \right)$ and since members of $BC\left ((0,\tau],\x \right)$ belong to $\elj$as well, the so-found solutions of \eqref{ueny} coincide with those found earlier. Hence, by Proposition \ref{folk},  \eqref{l1} will be proved once we show that 
\[\gra \Phi_n(v) = \Phi(v) \] 
in the sense of $\|\cdot \|_\lam $ norm or, equivalently, in the norm $\|\cdot \|_{L^1},$ where, of course, 
\[ [\Phi(v)](t) = T(t)x + \int_0^tT(t-s) F(s,v(s)) \ud s .\]

That $\gra \int_0^\tau \|T_n(t)x - T(t)x\|\ud s =0$ is clear by the Dominated Convergence Theorem, because the integrands here converge pointwise and are dominated by $2C\|x\|$. Hence, it suffices to show that 
\begin{equation}\label{h1}
\gra \int_0^\tau \int_0^t \|T_n(t-s) F(s,v(s)) - T(t-s) F(s,v(s)) \| \ud s \ud t = 0. \end{equation}  
The integrands in 
\begin{equation}\label{h2} 
\int_0^t \|T_n(t-s) F(s,v(s)) - T(t-s) F(s,v(s)) \| \ud s \end{equation}
converge pointwise to $0$ and are bounded by 
\begin{align}
2C\|F(s,v(s))\| &\le 2C\|F(s,v(s)) - F(s,0)\| + 2C\|F(s,0)\| \nonumber \\
&\le 2CL\|v(s)\| + 2C\sup_{0\le s\le \tau} \|F(s,0)\| \nonumber \\
&=: 2CL\|v(s)\|  + C_1. \label{elek} \end{align}
Since $s\mapsto 2CL\|v(s)\|  + C_1$ is integrable on $(0,\tau)$, invoking the Dominated Convergence Theorem again we see that the expression in \eqref{h1} converges to $0$ for all $t \in (0,\tau).$ Moreover, 
\(
\int_0^t \|T_n(t-s) F(s,v(s)) - T(t-s) F(s,v(s)) \| \ud s  \) does not exceed 
\[ 2CL \int_0^t \|v(s)\| \ud s + 2C_1 t \le 2CL \| v\|_{L^1} + 2C\tau \]
so that the same argument proves \eqref{h1}.
\subsection{Uniform convergence on compact sets}\label{uniform} 
To complete the proof of Theorem \ref{mainprim} it suffices to show that 
\[ \gra \int_0^t T_n(t-s)F(s,u_n(s))\ud s = \int_0^t T(t-s)F(s,u(s))\ud s \]
uniformly on $[0,\tau], \tau >0.$  However, the norm of the difference between the two integrals featuring here does not exceed 
\begin{align*} 
&\int_0^t \left \|T_n(t-s) [F(s,u_n(s))- F(s,u(s))] \right\| \ud s + I_n(t) \\
& \le CL \int_0^\tau \|u_n(s) - u(s)\| \ud s + I_n(t),  
\end{align*}
where $I_n(t)$ is the integral appearing in \eqref{h2} with $v$ replaced by $u$. Hence, by \eqref{l1}, we are left with proving uniform convergence of $I_n(t), t \in [0,\tau]$ to $0.$ 

We begin by recalling that by assumption for any $y\in \x$,  $T_n(r)y $ converges to $T(r)y$ uniformly on compact subsets of $(0,\infty)$. In other words, for each $y$ and each $\delta >0$, 
\[ \sup_{r \in [\delta, \tau]} \|T_n(r)y - T(r)y\| \]
may be made arbitrarily small by choosing sufficiently large $n$. A familiar $3$-epsilon argument (using the assumption of equiboundedness \eqref{wspol}) shows that for any compact set $K \subset \x$, the same is true of 
\[ \sup_{y\in K} \sup_{r \in [\delta, \tau]} \|T_n(r)y - T(r)y\| .\]
Now, since $u$ is bounded on $(0,\tau]$, our estimate \eqref{elek} (again, with $v$ replaced by $u$) shows that the integrands in $I_n(t)$ are dominated by a certain common constant.  Therefore, given $\eps >0$ one may find a $\delta >0$ such that the norm of the sum of parts of $I_n(t)$ resulting from integration over $[0,\delta]$ and $[t -\delta, t]$ is smaller than $\frac \eps 2,$ regardless of the choice of $\tau \ge t> 2\delta$. (If $t< 2\delta,$ $\|I_n(t)\|<\eps .$)  
Finally, the remaining integral, the one resulting from integration over $[\delta, t- \delta]$,
can by estimated by 
\begin{align*} &\int_\delta^{t-\delta} \sup_{y\in K_\delta} \|T_n(t-s)y - T_n(t-s)y\| \ud s \\ &\le (t-2\delta)\sup_{y\in K_\delta} \sup_{r \in [\delta,\tau]} \|T_n(r)y -T(r)y \|\\
&\le \tau \sup_{y\in K_\delta} \sup_{r \in [\delta,\tau]} \|T_n(r)y -T(r)y \|, \end{align*}
where $K_\delta$ is the image of $[\delta, \tau]$ via  $s \mapsto F(s,u(s))$. Since the latter function is continuous (by continuity of $u$ and joint continuity of $F$), $K_\delta$ is a compact set, and so the last expression may be made smaller than $\frac \eps 2$ by taking $n$ sufficiently large. This completes the proof.

\section{Examples}\label{przyklady}

\newcommand{\grubex}{\mathbb X}
\newcommand{\Jcg}[1]{\left ( #1 \right )_{i=1,...,N}} 

\subsection{Shadow systems}

\newcommand{\dod}{
For the analysis  of the first example in this section, we need the singular perturbation theorem due to T.G. Kurtz \cite{ethier,kurtzper,kurtzapp} (see also \cite{knigaz}). Hence, we briefly recall it here. 

Let $\jcg{\eps_n}$ be a sequence of positive numbers converging to $0.$ Suppose  $A_n , n \ge 1$ 
are generators of equi-bounded semigroups $\sem{A_n}$ in a Banach space $\x$, and $Q$ generates a strongly continuous semigroup $\sem{Q}$ such that\begin{equation}\lim_{t\to \infty} \e^{tQ} x =: Px , \qquad x \in \x \label{idemp} \end{equation} exists. 
Then (see e.g. \cite[p. 516]{hp}) $P$ is a bounded idempotent operator and \begin{equation} \label{kerrange} Ker \, Q = Range \, P, \qquad \overline{Range \,Q}= Ker \, P. \end{equation} Let  
\[ \x' = Range \, P . \] Also, let $\aex $ be the extended limit of $A_n, n \ge 1$ defined as a multivalued operator: 
\[ (x,y) \in \aex \text{ \iff } \exists _{x_n \in \mc D(A_n)} \gra x_n = x, \gra A_nx_n = y. \]

\begin{thm} \label{tkurtz} (\textbf {Kurtz}) \textit Let $A$ be an operator in $\x$, $D$ be a subset of its domain, and assume that 
\begin{itemize} 
\item [(a) ] for $x \in D$, $(x,Ax) \in \aex$ where $\aex$ is the extended limit of $A_n, n \ge 1,$ 
\item [(b) ] for $y $ in a core $D'$ of $Q$, $(y,Qy) \in B_{\textrm ex}$ where 
$B_{\textrm ex}$ is the extended limit of $\eps_n A_n,n \ge 1,$ 
\item [(c) ] the operator $PA $ with domain $D \cap \x' $ is closable and its closure $\overline{PA}$ generates a strongly continuous semigroup in $\x'.$ 
\end{itemize}
Then 
\begin{equation} \label{17a} \gra \e^{t A_n } x = \e^{t\overline{PA}} P x, \, \qquad x \in \x, t >0. 
\end{equation}
For $x \in \x'$ the same is true for $t=0$ as well, and the limit is uniform in compact subsets of $[0,\infty);$ for other $x $ the limit is uniform in compact subsets of $(0,\infty).$ 

\end{thm}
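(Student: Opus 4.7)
The plan is to prove Theorem~\ref{tkurtz} by reducing to a resolvent convergence statement and then invoking the Trotter--Kato--Neveu theorem in its irregular form, as discussed around \eqref{xzero}. Concretely, the target is to establish that for each $\lam > 0$,
\[ \gra (\lam - A_n)^{-1} x = (\lam - \overline{PA})^{-1} P x, \qquad x \in \x.\]
Equiboundedness of the semigroups $\sem{A_n}$ gives equiboundedness of the resolvents, and together with (c) this yields \eqref{17a} and the stated uniformity statements on compact subsets of $(0,\infty)$ and $[0,\infty)$.

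I would establish the resolvent convergence in two steps. \emph{On the regularity space $\x'$.} Given $y \in D \cap \x'$, assumption~(a) produces $y_n \in \mc D(A_n)$ with $y_n \to y$ and $A_n y_n \to A y$; hence $z_n := (\lam - A_n) y_n \to \lam y - Ay$, and applying $P$ gives $P z_n \to \lam y - P A y = (\lam - PA)y$. By (c), as $y$ varies over $D \cap \x'$ these vectors are dense in $\x'$. Combining this with equiboundedness of $(\lam - A_n)^{-1}$ and a standard three-$\eps$ argument yields
\[ \gra (\lam - A_n)^{-1} z = (\lam - \overline{PA})^{-1} z, \qquad z \in \x'. \]

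\emph{Off the regularity space.} Here the singular assumption~(b) enters. The operators $B_n := \eps_n A_n$ satisfy the hypotheses of the classical Trotter--Kato theorem with limit $Q$ (by (b) applied on the core $D'$), so $\e^{s B_n} \to \e^{sQ}$ strongly, uniformly on compact subsets of $[0,\infty)$. At the resolvent level, multiplying the equation $(\lam - A_n) u_n = x$ by $\eps_n$ recasts it as $(\lam \eps_n - B_n) u_n = \eps_n x$; any limit point $u$ of $(u_n)$ must therefore satisfy $-Q u = 0$, i.e.\ lie in $\Ker Q = \x'$. Combined with the Yosida-type identity $P x = \lim_{\mu \to 0+} \mu(\mu - Q)^{-1} x$ (valid by \eqref{idemp} and \eqref{kerrange}), this should deliver $(\lam - A_n)^{-1}(I - P) x \to 0$, so that $(\lam - A_n)^{-1} x$ and $(\lam - A_n)^{-1} P x$ share a common limit.

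The main obstacle is the rigorous execution of the second step, specifically the interchange of the $n \to \infty$ and $\mu \to 0$ limits. I would handle it via a diagonal argument, approximating $\mu(\mu - Q)^{-1}$ by $\mu(\mu - B_n)^{-1} = (\mu/\eps_n)\bigl((\mu/\eps_n) - A_n\bigr)^{-1}$ and selecting $\mu = \mu_n \to 0$ with $\mu_n / \eps_n \to \infty$ at a suitably controlled rate, feeding the output back into the first step. Once the resolvent convergence $(\lam - A_n)^{-1} \to (\lam - \overline{PA})^{-1} P$ is in place, the distinction between uniform convergence on $[0,\infty)$ for $x \in \x'$ and on $(0,\infty)$ for general $x \in \x$ follows automatically from the irregular Trotter--Kato framework recalled in Section~\ref{intro}.
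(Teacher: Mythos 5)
Note first that the paper does not actually contain a proof of this theorem: the statement lives inside the macro \texttt{\textbackslash dod}, which is defined but never invoked, and in the running text the result is simply cited to Kurtz's original papers and to \cite{knigaz}. So there is no ``paper's proof'' to compare against; I can only assess your argument on its own terms.

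Your reduction breaks down at the very last step, and the breakdown is not a technicality. You claim that once the strong resolvent convergence $(\lam - A_n)^{-1}x \to (\lam - \overline{PA})^{-1}Px$ is in place, ``the distinction between uniform convergence on $[0,\infty)$ for $x\in\x'$ and on $(0,\infty)$ for general $x\in\x$ follows automatically from the irregular Trotter--Kato framework recalled in Section~\ref{intro}.'' That framework does not say this. What it says, and what the paper states explicitly, is that strong resolvent convergence with equibounded semigroups gives uniform convergence on compacts of $[0,\infty)$ for $x\in\x_0$ (the regularity space), and that for $x\notin\x_0$ the limit $\lim_n \e^{tA_n}x$ need not exist at all; see the discussion around \eqref{xzero} and the reference to \cite{deg}. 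In other words, resolvent convergence alone carries no information off the regularity space. The whole point of Kurtz's singular perturbation theorem --- and the part that requires real work --- is precisely the conclusion for $x\notin\x'$, and it cannot be read off from resolvent convergence. The actual argument uses assumption (b) at the semigroup level: one writes $\e^{tA_n}(I-P)x = \e^{(t/\eps_n)\eps_n A_n}(I-P)x$, uses (b) to compare $\e^{s\eps_n A_n}$ with $\e^{sQ}$, and exploits $t/\eps_n\to\infty$ together with $\lim_{s\to\infty}\e^{sQ}(I-P)x = P(I-P)x = 0$ from \eqref{idemp}--\eqref{kerrange}, with uniform control on $t\in[\delta,\tau]$. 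None of this is present in your sketch.

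There is also a secondary problem in your first step. You take $y\in D\cap\x'$, produce $y_n\to y$ with $A_n y_n\to Ay$, set $z_n=(\lam-A_n)y_n$, and want to conclude that $(\lam-A_n)^{-1}$ converges on the dense set $\{(\lam-PA)y\}$. But what you control directly is $(\lam-A_n)^{-1}z_n=y_n\to y$ with $z_n\to(\lam-A)y$, and $(\lam-A)y$ differs from $(\lam-PA)y$ by $(I-P)Ay$, which generically lies outside $\x'$. To pass from one to the other you must already know $(\lam-A_n)^{-1}(I-P)w\to 0$, i.e.\ you need your second step first. So the two steps are not independent as presented. Finally, the diagonal argument with $\mu_n/\eps_n\to\infty$ is unnecessary: once you approximate $(I-P)x$ in norm by $Qy$ with $y\in D'$ (possible since $\overline{\Range Q}=\Ker P$ and $D'$ is a core), you can use (b) to pick $y_n\to y$, $\eps_n A_n y_n\to Qy$, and compute $(\lam-A_n)^{-1}A_n y_n = \lam(\lam-A_n)^{-1}y_n - y_n$, which is bounded, so that $\eps_n(\lam-A_n)^{-1}A_n y_n\to 0$; no interchange of limits is needed. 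But fixing this does not close the main gap described above.
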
}



 Let $S$ be a compact, Hausdorff space, $N$ and $N_0< N$ be two integers, and $B_i, i =1,...,N$ be the generators of conservative Feller semigroups in $C(S)$. 
Given $\kappa_n, n \ge 1$, such that $\gra \kappa_n=\infty,$ we consider the semigroups in $\grubex = (C(S))^N$, the Cartesian product of $N$ copies of $C(S)$, given by 
\[ T_n(t) \Jcg{f_i} = \left (\e^{tB_1} f_1,...,\e^{tB_{N_0}} f_{N_0},  \e^{t\kappa_nB_{N_0+1}} f_{N_0+1}, ..., \e^{t\kappa_n B_N} f_N \right ), \]
so that  the last $N-N_0$ `variables' are `fast', and the first $N_0$ are `slow'. We assume also  that the (strong) limits 
\begin{equation}\label{granicat} \lim_{t\to \infty} \e^{tB_i} f = : P_i f , \qquad f \in C(S) , \end{equation}
exist for each $N_0+1\le i \le N.$ Then, for each $n\ge 1$, we consider the following system of equations for an unknown function $t\mapsto u_n(t) = \Jcg{u_{n,i}(t) }$ with values 
in $\grubex$: 
\begin{align}
u_{n,i} (t) &= \phantom{\kappa_n} B_i u_{n,i}(t) + F_{i} (u_n), \qquad t\ge 0, i =1,...,N_0, \nonumber \\
u_{n,i}(t) &= \kappa_n B_{i} u_{n,i}(t) + F_{i}(u_n),  \qquad t\ge 0, i =N_0+1,...,N, \label{nieshadow}
\end{align}  
where $F_i: \grubex \to C(S), i=1,\dots, N$, are Lipschitz continuous maps. We will show that as $n\to \infty,$ 
mild solutions to these equations with initial condition $u_n(0)= \Jcg{f_i} \in \grubex $ converge to those of the Shadow System:
\begin{align}
u_{i}' (t) &= B_i u_{i}(t) + F_{i} (u), \quad \qquad t\ge 0, i =1,...,N_0, \nonumber \\
u_{i}'(t) &= P_i F_{i}(u),  \qquad \qquad \qquad \ t\ge 0, i =N_0+1,...,N, \label{shadow}
\end{align}
for an unknown function $t\mapsto u(t)=\Jcg{u_i(t)}$, with initial condition \begin{equation}\label{defp} u(0)=\mathcal P \Jcg{f_i}:= \left (f_1,...,f_{N_0},P_{N_0+1} f_{N_0+1},...,P_{N} f_{N}\right ).\end{equation} 
Indeed, this is a direct consequence of our Theorem \ref{mainprim}, because we have 
\begin{align*} \gra T_n(t) \Jcg{f_i} &= \left (\e^{tB_1} f_1,...,\e^{tB_{N_0}} f_{N_0},  P_{N_0+1} f_{N_0+1}, ..., P_N f_N \right ),\\
& = \e^{tA} \mc P \Jcg{f_i} \end{align*}
where $\mc P$ is defined in \eqref{defp}, and $\sem{A}$ is the strongly continuous semigroup in the Cartesian product
\[ \grubex_0 := [C(S)]^{N_0} \times \prod_{i=N_0+1}^N \text{Range\,}  P_i \]
given by 
\[ \e^{tA} \Jcg{f_i} = \left (\e^{tB_1} f_1,...,\e^{tB_{N_0}} f_{N_0},   f_{N_0+1}, ..., f_N \right ). \]

In looking for a prototype of this scheme, we go back to the paper by J.P. Keener \cite{keener}, who studied a model of activator-inhibitor reaction system  of the form: 
\begin{align}
\frac{\partial a}{\partial t}(t,x) &= d_a \frac{\partial^2 a}{\partial x^2}(t,x) + F_1(a(t,x),h(t,x)),\nonumber \\
\frac{\partial h}{\partial t}(t,x) &= d_h  \frac{\partial^2 h}{\partial x^2}(t,x)+ F_2(a(t,x),h(t,x)),\label{keener}
\end{align}
where both activator $a$ and inhibitor $h$ are distributed on $[0,1]$ and $F_1$ and $F_2$ and certain Lipschitz continuous functions. Under additional assumptions on $F_1$ and $F_2$, and under Neumann boundary conditions,  Keener studies this system 
in the case where the inhibitor diffusion coefficient $d_h$ is much larger than that for the activator (\ie $d_a$); he seems to be also responsible for introducing the term \emph{Shadow System}. Since the Laplace operator with Neumann boundary conditions is known to have property 
\eqref{granicat} with the limit projection  mapping an $f \in C([0,1])$ to the constant function being the average of $f$ over $[0,1]$, Keener's example falls into our scheme, and the Shadow System for \eqref{keener},  obtained by letting $d_h \to \infty$, is of the form 
\begin{align*}
\frac{\partial a}{\partial t}(t,x) &= d_a \frac{\partial^2 a}{\partial x^2}(t,x) + F_1(a(t,x),h(t,x)),\nonumber\\
\frac{\partial h}{\partial t}(t,x) &= \int_0^1 F_2(a(t,x),h(t,x))\ud x ;
\end{align*}
with initial conditions $a(0,x)=a_0(x), h(0,x) = \int_0^1 h_0(x) \ud x$ where $a_0$ and $h_0$ are initial conditions of \eqref{keener} (see Keener's equations (5.7a)--(5.7c)).

As a second example, we consider the following (rescaled) model of early lung cancer due to Marciniak--Czochra and Kimmel, see \cite{markim1,markim2}:
\begin{align}
\frac {\partial c}{\partial t}(t,x) &= ((2p-1)a(b(t,x),c(t,x)) - d_c) c(t,x), \nonumber \\
\frac {\partial b}{\partial t} (t,x) & = \alpha(c(t,x)) g(t,x) - d_b b(t,x) - db(t,x) , \nonumber \\
\frac {\partial g}{\partial t} (t,x) & =\frac 1\gamma \frac{\partial^2 g}{\partial x^2} - \alpha (c(t,x)) g(t,x) - d_g g(t,x) + \kappa (c(t,x)) + db(t,x), \label{mck}
\end{align}
where $c,b$ and $g$ are densities of cells, and of bound and free growth factor molecules, respectively, distributed across the unit interval $[0,1]$ (we keep the notations of \cite{markim1}). Without going into details on the meaning of functions and coefficients in this system (see also Chapter 36 in \cite{knigaz}), we simply state that as $\gamma \to 0$, \ie in the case where the growth factor molecules are diffusing very quickly, the solutions to this system converge to those of 
\begin{align}
\frac {\partial c}{\partial t}(t,x) &= ((2p-1)a(b(t,x),c(t,x)) - d_c) c(t,x), \nonumber \\
\frac {\partial b}{\partial t}(t,x) & = \alpha(c(t,x)) g(t,x) - d_b b(t,x) - db(t,x) ,  \label{mckshadow}
\\
\frac {\partial g}{\partial t} (t,x)& =-\int_0^1 [\alpha (c(t,x)) g (t,x)+ d_g g(t,x) - \kappa (c(t,x)) - db(t,x)]\ud x.  \nonumber\end{align}
This is a nearly direct consequence of our main theorem coupled with the fact that the third equation in the model is supplied with Neumann boundary conditions. The only difficulty lies in the fact that the $F_1, F_2 $ and $F_3$ functions involved here are not globally Lipschitz continuous: they are merely locally Lipschitz continuous (see e.g. \cite{knigaz,pazy} for appropriate definitions). However, it may be shown (see Chapter 36 in \cite{knigaz}) that so-called M\"uller conditions are satisfied: there are constants $C_1,C_2$ and $C_3$ such that at the boundaries of the region $\mc R$ where $c \in [0, C_1], b \in [0,C_2]$ and $g \in [0,C_3]$ the vector field formed by $F_1,F_2$ and $F_3$ points towards the region. Then, the remedy is to extend the $F_i$'s to globally Lipschitz continuous maps on the whole of $C([0,1])$ with the Lipschitz constant suitable for $\mc R$. Then, the theory already developed asserts existence and uniqueness of solutions to \eqref{mck} with modified $F_i$'s. But the force of M\"uller conditions is that solutions starting in $\mc R$ never leave $\mc R$, and so they are in fact  solutions for \eqref{mck} with original $F_i$'s. Hence, a more precise statement should read: mild solutions of \eqref{mck} converge to those of \eqref{mckshadow} provided they start in $\mc R$. 

This connection between \eqref{mck} and \eqref{mckshadow} has been made in \cite{nowaaniaprim}. The latter paper also gives more delicate information on the speed of convergence based on heat semigroup estimates to be found e.g., in  \cite[p. 25]{rothe} or \cite[Lemma 1.3]{winkler}. However, this result still seemed to suggest that the formation of the Shadow System is somehow related to special properties of the Neumann Laplacian. It was in \cite{osobliwiec} that we argued that the phenomenon occurs for a much larger class of Feller generators $B_i$, namely for those with the property that there exists a projection $P_i$ such that for certain constants $\varepsilon $ and $K$,  
\[ \| \e^{tB_i} - P_i \|\le K \e^{-\varepsilon t}, \qquad t> 0, i = N_0+1,..., N ,\]
but the reasoning presented in \cite{osobliwiec} seems to be flawed. Fortunately, our Theorem \ref{main} solves the problem, by proving, as we have seen, our conjecture even under less stringent condition \eqref{granicat}.

\subsection{Diffusion in the thin layer between two parallel planes}\label{thin} 

In modeling biological phenomena via reaction-diffusion equations one sometimes needs to account for the fact that diffusion occurs in a thin layer between two boundaries. For example, nuclei of so-called B-lymphocytes are so large that it is sensible to think of signal-transmitting kinases as diffusing on a two-dimensional sphere modeling the cell membrane rather than in a three-di\-men\-sion\-al region between the nucleus and the membrane (see \cite{dlajima,thin} for more details). A crucial issue related to such approximations is the question of how boundary/transmission conditions featuring in the three-dimensional model are transferred to the two-dimensional model. 

Here we consider a somewhat idealized situation in which the boundaries are two parallel planes lying close to each other, and show that in the limit the boundary conditions become legitimate, integral parts of the master equation. As discussed in \cite{thin}, failure to include these parts in the limit reaction term may render the limit equation incapable of capturing the true nature of biological processes.

More specifically, given $\eps>0$ we consider the thin layer 
\[
\Omega_\eps \coloneqq \{ (x,y,z)\in \R^3 : 0\leq z\leq \eps\};
\]
and are concerned with the reaction-diffusion equation in $\Omega_\eps$: 
\begin{align}
\label{eq.diff1}
\partial_t u(t,x,y,z) &= \partial_x^2u(t,x,y,z) +\partial_y^2u(t,x,y,z) +\partial_z^2u(t,x,y,z)\\
&\quad +F (u(t,x,y,z)), \nonumber \qquad (x,y,z) \in \Omega_\eps, t >0  
\end{align}
with the reaction term $F:\R \to \R$ assumed to be (globally) Lipschitz continuous.  
On the upper and lower planes $\{(x,y, \eps) : x, y \in \R\}$ and $\{(x,y,0) : x,y \in \R\}$
of $\Omega_\eps$  we impose Robin boundary conditions of the form
\begin{align}
\partial_z u(t,x,y,\eps)&=  -\eps c(x,y)u(t,x,y,\eps),   \label{eq.robin1} \\ 
\partial_z u(t,x,y,0 )&= \eps d(x,y)u(t,x,y,0), \qquad x,y \in \R, t >0, \nonumber
\end{align}
where $c,d: \R^2 \to [0,\infty)$ are given, essentially bounded functions. 
As explained in \cite{dlajima,thin}, the scaling factor (i.e., $\eps$) is needed in these boundary conditions; otherwise the limit discussed below will be uninteresting. These boundary conditions describe a stochastic mechanism of removing some of the diffusing particles touching the boundaries. 

We want to study the behavior of the solutions to the problem 
\eqref{eq.diff1}--\eqref{eq.robin1} as the parameter $\eps\to 0$, i.e.\ when the thin layer collapses to a two dimensional plane. We will argue that as $\eps \to 0$ solutions to our system become more and more `flat in the $z$-direction', and thus may be thought of as functions of two variables. To see this effect more clearly, we look at $\Omega_\eps$ through a magnifying glass, by  introducing the  change of variables, $\tilde{z} = \eps^{-1}z$, which transforms $\Omega_\eps$ into \[\Omega:=\Omega_1. \]
We write $\tilde{u}(t,x,y,\tilde{z}) = u(t,x,y,\eps^{-1}z)$. A short computation shows that the reaction-diffusion equation \eqref{eq.diff1} transforms to 
\begin{align}
\partial_t \tilde{u}(t,x,y,\tilde{z}) &= \partial_x^2\tilde{u}(t,x,y,\tilde{z}) + \partial_y^2 \tilde{u}(t,x,y,\tilde{z}) + \eps^{-2}\partial_z^2 \tilde{u}(t,x,y,\tilde{z}) \nonumber \\ &\quad + F(\tilde{u} (t,x,y,\tilde{z}) ), \qquad (x,y,\tilde{z}) \in \Omega, t >0 . \label{eq.diff2}
\end{align}
while the boundary conditions \eqref{eq.robin1} are transformed to
\begin{align}
\partial_{\tilde{z}} \tilde{u}(t,x,y,1) &= - \eps^2 c(x,y)\tilde{u}(t,x,y,1) \label{eq.boundary}\\
\partial_{\tilde{z}} \tilde{u}(t,x,y,0) &= \eps^2 d(x,y)\tilde{u}(t,x,y,0) \qquad x,y\in \R, t >0. \nonumber  \end{align}
For notational simplicity we drop the tildes, and then rewrite this system as an abstract evolution equation on the space $L^2(\Omega)$, as follows:
\begin{equation}\label{eq.cauchyproblemf}
\partial_t u_\eps(t) = A_\eps u_\eps (t) + F(u_\eps (t)), \qquad u(0) = u_0
\end{equation}
where $u_\eps : [0,\infty) \to L^2(\Omega)$ and $A_\eps$ is a suitable realization of the differential operator
$\partial_x^2+\partial_y^2+\eps^{-2}\partial_z^2$, subject to the boundary conditions \eqref{eq.boundary}. The reaction term $F$, although denoted by the same letter as the function featuring in \eqref{eq.diff1}, has a slightly different meaning. Namely, for a $u\in L^2(\Omega)$ we may define 
\begin{equation}\label{ef} 
\left ( \mathsf F(u) \right ) (x,y,z) := F(u(x,y,z))\end{equation}
where $F$ on the right-hand side is the function from \eqref{eq.diff1}. Assuming that $F(0)=0$ or, more generally, that there is a $u\in L^2(\Omega)$ such that $\mathsf F(u) \in L^2(\Omega)$, we check, using the existence of a global Lipschitz constant for $F$, that \eqref{ef} defines a globally Lipschitz continuous map $L^2(\Omega)\to L^2(\Omega)$, with the Lipschitz constant inherited from $F$. In \eqref{eq.cauchyproblem} and in what follows, for simplicity of notation (and to comply with notations of Sections \ref{intro} and \ref{dowod}), we do not distinguish between $F$ and $\mathsf F$.

As discussed in Section \ref{intro}, in dealing with well-posedness and convergence of solutions to \eqref{eq.cauchyproblemf}, it is a good strategy to work first with the related problem without the nonlinear term:
\begin{equation}\label{eq.cauchyproblem}
\partial_t u_\eps(t) = A_\eps u_\eps (t), \qquad u(0) = u_0,
\end{equation}
and we will follow this path. To this end, we will establish well-posedness of the problem \eqref{eq.cauchyproblem} making use of the theory of \emph{sesquilinear forms}. We recall that if $H$ is a complex Hilbert space, a \emph{sesquilinear form}
on $H$ is a mapping $\mathfrak{a}: D(\mathfrak{a}) \times D(\mathfrak{a}) \to \C$ which is linear in the first component and antilinear in the second component. It is called \emph{symmetric}, if $\mathfrak{a}[u,v] = \mathfrak{a}[v,u]$. Note that for a symmetric form $\mathfrak{a}$ we have $\mathfrak{a}[u] \coloneqq \mathfrak{a}[u,u] \in \R$ for every $u\in D(\mathfrak{a})$. A symmetric form $\mathfrak{a}$ is called \emph{accretive} if
$\mathfrak{a}[u,u] \geq 0$ for all $u\in D(\mathfrak{a})$; it is called \emph{closed}, if $D(\mathfrak{a})$ is a Hilbert space with respect to the inner product $[u,v]_\mathfrak{a} = \mathfrak{a}[u,v] + [u,v]_H$. A sesquilinear form is called \emph{densely defined}, if $D(\mathfrak{a})$ is dense in $H$.

Given an accretive, symmetric and closed sesquilinear form $\mathfrak{a}$ that is densely defined, we can define the associated operator $A$ by setting 
\[
D(A) = \{ u\in D(\mathfrak{a}) : \exists\, f\in H : \mathfrak{a}[u,v] = [f, v]_H\,\,\forall\, v\in D(\mathfrak{a})\} 
\]
and $Au\coloneqq f$. We thus have $\mathfrak{a}[u,v] = [Au, v]_H$ for all $u\in D(A)$ and $v\in D(\mathfrak{a})$.
It is well known that if $A$ is associated to an accretive, symmetric, densely defined and closed sesquilinear form, then $-A$ is the generator of an analytic contraction semigroup on the space $H$. For more information on sesquilinear forms, we refer to \cite{ouhabaz}.

To employ this theory, we have to find a sesquilinear form $\mathfrak{a}_\eps$ such that $-A_\eps \coloneqq
\partial_x^2+\partial_y^2+\eps^{-2}\partial_z^2$ with boundary conditions \eqref{eq.boundary} is the associated operator. To this end, we make a formal computation. We multiply $-A_\eps u$ by a $\bar v\in L^2(\Omega)$, integrate over $\Omega $ and integrate by parts. This yields
\begin{align}
-\int_\Omega A_\eps u \bar{v}\, \di (x,y,z) \notag & =  
\int_\Omega \left [ \partial_x u\partial_x\bar v + \partial_y u \partial_y\bar v + \eps^{-2}\partial_zu \partial_z\bar v\, \right ] \di (x,y,z) \notag\\
&
\quad - \eps^{-2}\int_{\R^2} \partial_zu(x,y,1) \bar v(x,y,1)  \di (x,y)\notag\\
&\quad + \eps^{-2}\int_{\R^2}\partial_z u(x,y,0)\bar v(x,y,0)\,   \di (x,y)  \notag \\
& =  \int_\Omega \left [ \partial_x u\partial_x\bar v + \partial_y u \partial_y\bar v + \eps^{-2}\partial_zu \partial_z\bar v\right ] \,\di (x,y,z)\notag\\
&\quad 
+ \int_{\R^2} c(x,y)u(x,y,1)\bar v(x,y,1) \,\di (x,y) \notag \\
&\quad 
+ \int_{\R^2} d(x,y)u(x,y,0)\bar v(x,y,0) \,\di (x,y) \notag \\
&=: \mathfrak{a}_\eps[u,v]. \label{eq.a}
\end{align}
Here we have used the boundary conditions \eqref{eq.boundary} in the second step.
Our first result is as follows:

\begin{lem}\label{l.form}
Define $\mathfrak{a}_\eps$ by \eqref{eq.a} on $D(\mathfrak{a}) = H^1(\Omega)$. Then $\mathfrak{a}_\eps$ is an accretive, symmetric, densely defined and closed sesquilinear form on $H= L^2(\Omega)$. Consequently, if we denote the operator associated to $\mathfrak{a}_\eps$ by $-A_\eps$, then
$A_\eps$ generates an analytic contraction semigroup on $L^2(\Omega)$.
\end{lem}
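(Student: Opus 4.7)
The plan is to verify each of the four required properties of $\mathfrak{a}_\eps$ in turn -- symmetry, density of the domain, accretivity, and closedness -- and then invoke the standard form-generation theorem recalled just before the lemma. Three of these are essentially immediate; the only one requiring genuine work is closedness, which is where the trace theorem will enter.

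Density of $H^1(\Omega)$ in $L^2(\Omega)$ is standard since $C_c^\infty(\Omega)\subset H^1(\Omega)$. Symmetry is visible from the formula: each of the three gradient integrands is of the form $\partial_{x_i} u\,\partial_{x_i}\bar v$, which turns into $\partial_{x_i} v\,\partial_{x_i}\bar u$ upon conjugating and swapping arguments; the two boundary integrands are symmetric because $c,d$ are real-valued. For accretivity, setting $v=u$ gives $\mathfrak{a}_\eps[u,u]=\int_\Omega\bigl(|\partial_x u|^2+|\partial_y u|^2+\eps^{-2}|\partial_z u|^2\bigr)\di(x,y,z)+\int_{\R^2} c\,|u(\cdot,1)|^2\di(x,y)+\int_{\R^2} d\,|u(\cdot,0)|^2\di(x,y)\geq 0$, because $c,d\geq 0$ by assumption.

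The main step, and the expected obstacle, is closedness, i.e.\ completeness of $H^1(\Omega)$ with respect to the form norm $\|u\|_{\mathfrak{a}_\eps}^2=\mathfrak{a}_\eps[u,u]+\|u\|_{L^2}^2$. My plan is to show that, for each fixed $\eps>0$, this norm is equivalent to the standard $H^1(\Omega)$-norm; closedness then follows because $H^1(\Omega)$ is already complete in its own norm. The lower bound $\|u\|_{\mathfrak{a}_\eps}^2\geq \min(1,\eps^{-2})\,\|u\|_{H^1}^2$ is immediate since the boundary terms are non-negative. The reverse bound splits into two pieces: the gradient contribution is trivially bounded by $\max(1,\eps^{-2})\|\nabla u\|_{L^2}^2$, while the two boundary integrals are estimated by $\|c\|_\infty\|u(\cdot,1)\|_{L^2(\R^2)}^2+\|d\|_\infty\|u(\cdot,0)\|_{L^2(\R^2)}^2$. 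Here I would invoke the trace theorem for the slab $\Omega=\R^2\times[0,1]$, which furnishes a bounded trace operator $H^1(\Omega)\to L^2(\R^2\times\{j\})$ for $j=0,1$, yielding control by $C(\|c\|_\infty+\|d\|_\infty)\|u\|_{H^1}^2$. This establishes equivalence of norms and hence closedness.

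Having verified that $\mathfrak{a}_\eps$ is an accretive, symmetric, densely defined and closed sesquilinear form on $L^2(\Omega)$, the remaining assertion is a direct appeal to the classical form-generation correspondence recalled in the paragraph preceding the lemma, so that the operator $-A_\eps$ associated with $\mathfrak{a}_\eps$ generates an analytic contraction semigroup on $L^2(\Omega)$; see \cite{ouhabaz} for details.
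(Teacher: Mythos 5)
Your proof is correct and takes essentially the same approach as the paper's: symmetry and density are observed directly, accretivity follows from non-negativity of $c,d$, and closedness is obtained by showing the form norm is equivalent to the $H^1(\Omega)$-norm, with the trace theorem on the slab providing the upper bound on the boundary terms.
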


\begin{proof}
Obviously, $\mathfrak{a}_\eps$ is a symmetric sesquilinear form and $D(\mathfrak{a})$ is dense in $L^2(\Omega)$.
Moreover, we have
\[
[u,u]_{\mathfrak{a}_\eps} = \mathfrak{a}_\eps[u] +\|u\|^2_{L^2(\Omega)} \geq \min\{1, \eps^{-2}\} \|u\|_{H^1(\Omega)}^2 \geq 0.
\]
This shows that $\mathfrak{a}_\eps$ is accretive and that the norm $\|\cdot\|_{\mathfrak{a}_\eps}$ can be estimated from below by a multiple of the $H^1$-norm. Using the fact that the trace is a bounded operator from $H^1(\Omega)$
to $L^2(\{ (x,y,1) : x,y \in \R\})$ (and from $H^1(\Omega)$
to $L^2(\{ (x,y,0) : x,y \in \R\})$), see, e.g., Part I Case C of \cite[Theorem 4.12]{adams}, and that $c$ and $d$ are essentially bounded, we see that the $\|\cdot\|_{\mathfrak{a}_\eps}$-norm is actually equivalent to the $H^1$-norm. This proves that $\mathfrak{a}_\eps$ is closed.
\end{proof}

\begin{rem}
If $u \in H^2(\Omega)$ satisfies the boundary condition \eqref{eq.robin1} in the weak sense and $v \in H^1(\Omega)$, then the formal computation made above is justified and we find that $-[A_\eps u, v]_{L^2(\Omega)} = \mathfrak{a}[u,v]$. This shows that the operator associated to the form $\mathfrak{a}_\eps$ is indeed a suitable realization of the differential operator $A_\eps$.
\end{rem}

Next we want to let $\eps\to 0$. To that end, we use a convergence theorem for symmetric sesquilinear forms due to Simon \cite{simon}. In this theorem, the situation is as follows. We are given an increasing sequence $\mathfrak{a}_n$ of accretive, symmetric, densely defined and closed sesquilinear forms on a Hilbert space $H$. That the sequence is increasing means that $D(\mathfrak{a}_{n+1}) \subset D(\mathfrak{a}_n)$ and $\mathfrak{a}_n[u] \leq \mathfrak{a}_{n+1}[u]$ for all $u\in D(\mathfrak{a}_{n+1})$. Let $-A_n$ be the associated operator. Then $A_n$ generates an analytic contraction semigroup $(\e^{tA_n})_{t\geq 0}$ on $H$. 
We  define the limit form $\mathfrak{a}$ by setting
\[
D(\mathfrak{a}) \coloneqq \Big\{ u\in \bigcap_{n\in \N} D(\mathfrak{a}_n) : \sup_{n\in \N} \mathfrak{a}_n[u]<\infty\Big\}
\]
and $\mathfrak{a}[u] \coloneqq \sup_{n\in \N}\mathfrak{a}_n[u]$. For $u\neq v$ the values of $\mathfrak{a}[u,v]$
are defined via polarization. It turns out that $\mathfrak{a}$ is closed, accretive and symmetric as well. However, 
$\mathfrak{a}$ need not be densely defined. Let us put $H_0\coloneqq \overline{D(\mathfrak{a})}$. Then 
$\mathfrak{a}$ is a closed, densely defined, accretive and symmetric sesquilinear form on $H_0$. Let $-A$ be the associated operator on $H_0$. Then $A$ generates an analytic contraction semigroup on $H_0$. We extend this semigroup to all of $H$, setting it $0$ on $H_0^\bot$. With slight abuse of notation, we write $\e^{tA}P$ for this extension, where $P$ is the orthogonal projection onto $H_0$. Simon's theorem asserts  that in such circumstances $\e^{tA_n}$ converges irregularly to $\e^{tA}P$ as $n\to \infty$.

In the situation of the thin layer, we have  the following result: 
\begin{thm}
Consider the forms $\mathfrak{a}_\eps$ defined via \eqref{eq.a} on $D(\mathfrak{a}_\eps)= H^1(\Omega)$. This family increases (as $\eps$ decreases) and the limit form is given as $(\mathfrak{a}, D(\mathfrak{a}))$, where
\begin{align*}
D(\mathfrak{a}) & = \{ u \in H^1(\Omega)\, |\, \exists\, u_0\in H^1(\R^2) : u(x,y,z) = u_0(x,y)\\
& \quad  \mbox{ for almost all } x,y, \in \R, 
0<z <1\}
\end{align*}
and
\begin{align*}
\mathfrak{a}[u,v] &= \int_{\R^2} \left [\partial_x u_0(x,y)\partial_x\bar v_0(x,y) + \partial_y u_0(x,y)\partial_y\bar v_0(x,y) \right ] \,\mathrm{d} (x,y) \\
&\quad + \int_{\R^2} (c(x,y) +d(x,y)) u_0(x,y)\bar v_0 (x,y) \, \mathrm{d} (x,y) .
\end{align*}
\end{thm}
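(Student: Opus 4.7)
My approach would be to apply Simon's theorem in the form recalled just before the statement, which requires two ingredients: monotonicity of the family and identification of the sup-form. I would fix a sequence $\eps_n \searrow 0$ and work with $\mathfrak{a}_n := \mathfrak{a}_{\eps_n}$ throughout (the limit object is independent of the sequence, since the sup defining it is).

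First I would verify monotonicity. Inspecting the formula \eqref{eq.a}, the only $\eps$-dependent ingredient is the coefficient $\eps^{-2}$ in front of $\int_\Omega |\partial_z u|^2$; the tangential gradient terms and the two boundary integrals do not involve $\eps$. Hence as $\eps$ decreases, $\mathfrak{a}_\eps[u]$ does not decrease for any $u \in H^1(\Omega)$, and $D(\mathfrak{a}_\eps) = H^1(\Omega)$ is the same for every $\eps$. So $(\mathfrak{a}_n)$ is an increasing sequence of forms on $H = L^2(\Omega)$ in the sense required by Simon's theorem.

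Next I would identify $D(\mathfrak{a})$. By definition, $u \in D(\mathfrak{a})$ iff $u \in H^1(\Omega)$ and $\sup_n \mathfrak{a}_n[u] < \infty$. Since the non-$z$-terms are bounded in $n$, finiteness of the sup forces $\eps_n^{-2} \int_\Omega |\partial_z u|^2 \, \di(x,y,z)$ to remain bounded, which is only possible when $\partial_z u = 0$ in $L^2(\Omega)$. A standard argument (mollification in the $z$ direction, or Fubini plus the one-dimensional characterization of $H^1$ functions with vanishing weak derivative) then shows that such a $u$ admits a representative of the form $u(x,y,z) = u_0(x,y)$ for a.e.\ $(x,y,z) \in \Omega$, with $u_0 \in L^2(\R^2)$. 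Because the remaining weak derivatives $\partial_x u, \partial_y u$ lie in $L^2(\Omega)$ and do not depend on $z$, one gets $\partial_x u_0, \partial_y u_0 \in L^2(\R^2)$, so $u_0 \in H^1(\R^2)$. Conversely, any such $u$ satisfies $\partial_z u = 0$ and therefore $\mathfrak{a}_n[u]$ is constant in $n$, so the sup is finite and $u \in D(\mathfrak{a})$.

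Finally, I would compute $\mathfrak{a}[u] = \sup_n \mathfrak{a}_n[u]$ for $u$ in this domain. The $\eps_n^{-2}$-term vanishes identically, the tangential-gradient integral over $\Omega$ collapses to an integral over $\R^2$ (the $z$-slice has length $1$), and the two boundary traces coincide with $u_0$, so the boundary terms combine to $\int_{\R^2}(c+d)|u_0|^2$. Polarization then yields the claimed formula for $\mathfrak{a}[u,v]$. I expect the main technical obstacle to be the careful handling of the two traces of $u$ at $z=0$ and $z=1$ and the verification that $u_0$ itself belongs to $H^1(\R^2)$ rather than merely $L^2_{\mathrm{loc}}$; once that regularity is in hand the remaining computation is just Fubini.
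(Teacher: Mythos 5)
Your argument is correct and follows essentially the same route as the paper's proof: verify monotonicity of the forms (since only the $\eps^{-2}$ coefficient on the $\partial_z$-term varies), identify the domain by noting that boundedness of the sup forces $\partial_z u = 0$ and hence $z$-independence with $u_0\in H^1(\R^2)$, and then observe that on this domain $\mathfrak{a}_\eps[u]$ is constant in $\eps$ so the limit form is obtained by Fubini and polarization. The paper is somewhat more terse about the regularity of $u_0$ (it simply asserts the $H^1(\R^2)$-representative exists), whereas you flag this as the step requiring care; both are the same proof.
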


\begin{proof}
Note that $D(\mathfrak{a}_\eps)$ does not depend on $\eps$ and that $\mathfrak{a}_\eps[u] \leq \mathfrak{a}_\delta[u]$ for every $u\in H^1(\Omega)$ whenever $\delta\leq \eps$. This shows that $\mathfrak{a}_\eps$ is increasing as $\eps\downarrow 0$. The estimate
\[
\mathfrak{a}_\eps [u] \geq \eps^{-2}\int_\Omega |\partial_z u|^2\, \di (x,y,z)
\]
shows that $\sup_\eps \mathfrak{a}_\eps[u] <\infty$ implies that $\partial_z u = 0$ almost everywhere. This is equivalent to the fact that $u$ does not depend on $z$, i.e.\ we can find a function $u_0 \in H^1(\R^2)$ such that
$u(x,y,z) = u_0(x,y)$ for almost every $x,y\in \R$ and $0<z<1$. Conversely, it is obvious that for functions with such a representation $\sup_\eps\mathfrak{a}_\eps [u]<\infty$. This proves that the limit form has the claimed domain. Noting that for $u\in D(\mathfrak{a})$,  $\mathfrak{a}_\eps [u] = \mathfrak{a}[u]$ we conclude that also the expression for the limit form itself is clear (by polarization formula).
\end{proof}

We note that $\overline{D(\mathfrak{a})}$ consists exactly of those functions $f\in L^2(\Omega)$ which do not depend on the $z$-variable. This space is isomorphic to $L^2(\R^2)$. The negative of the operator associated to the limit form $\mathfrak{a}$ is of the form \[A= \Delta - (c+d),\] i.e. it is the Laplacian on $L^2(\R^2)$ perturbed by the potential $c+d$. The Simon theorem now reveals that 
\[\lim_{\eps \to 0}\e^{tA_\eps} f = \e^{tA} Pf , \qquad t>0, f \in L^2(\Omega),\]
where $P$, the orthogonal projection on $\overline{D(\mathfrak{a})}$, is given by 
\[ Pu (x,y) = \int_0^1 u (x,y,z) \ud z .\]
To repeat, this result should be interpreted by saying that as $\eps $ tends to zero, mild solutions to \eqref{eq.cauchyproblem} (which are solutions of \eqref{eq.diff1}--\eqref{eq.diff2} seen through a magnifying glass) gradually lose dependence on $z$ and become functions of two variables. Their limit dynamics is then governed by the operator $A$. By our main theorem (Theorem \ref{main}), the same applies to mild solutions to the full reaction-diffusion equations \eqref{eq.cauchyproblemf}, and the master equation for the limit dynamics is 
\[ u'(t) = \Delta u(t) - (c+d)u(t) + PF(u(t)), \qquad t \ge 0 .\] 
The notable difference between this equation and equation \eqref{eq.cauchyproblemf} (or, as a matter of fact, equation \eqref{eq.diff1}) is that, besides the $2D$ Laplace operator  naturally replacing the $3D$ Laplace operator, the right-hand side here has the additional term, equalling $-(c+d)u(t)$, which is a residue of the boundary conditions \eqref{eq.boundary}. Let us recall that in the $3D$ model the latter conditions describe loss of some of the particles touching the boundary. In the $2D$ limit these conditions disappear along with the boundary and their role is taken over by the term just mentioned; this term also describes loss of some of the diffusing particles, but now this process takes place inside the limiting $2D$ region.

\subsection{Modeling fast neurotransmitters}

In this section, we will show that the well-known ODE model of synaptic depression due to Aristizabal and Glavinovi\v{c} \cite{ag} may be obtained as a singular perturbation of the PDE model of Bielecki and Kalita \cite{bielecki}, provided that (as discussed in \cite{bobmor} and \cite{nagrafach}) in the latter model appropriate transmission conditions are introduced. In our previous paper \cite{zmarkusem}, we showed convergence of the related semigroups of operators, but did not tackle the nonlinearity featuring in the master equation.

In the Bielecki and Kalita model, it is assumed that a terminal bouton $\Omega_0$ is subdivided into $3$ subregions
$\Omega_1, \Omega_2$ and $\Omega_3$ which correspond to the so-called immediately available (readily releasable), the small (recycling) and the large (resting) pools, respectively (for discussion of alternative names for these pools see \cite{alabi2012}), with $\Omega_1$ surrounded by $\Omega_2$ which in turn is surrounded by $\Omega_3$. 
The master equation for the concentration $u$ of neurotransmitters in $\Omega_0$ is of the form  (see eq. (1) in \cite{bielecki})
\begin{equation} u'(t)= \mathscr L u (t)  + \beta (u^\sharp - u(t))^+. \label{biekal} \end{equation}
Here, $\mathscr{L}$ is a diffusion operator which is complemented with transmission conditions
on $\Gamma_{23}$ (the common boundary of $\Omega_2$ and $\Omega_3$) and $\Gamma_{12}$ (the common boundary of $\Omega_1$ and $\Omega_2$), and with Robin boundary conditions on $\Gamma_{01}$ (the outer boundary  of $\Omega_0$). The transmission conditions require that the co-normal derivative
of the function is proportional to the difference in concentration on both sides of the boundary, and thus allow seeing $\Gamma_{23}$ and $\Gamma_{21}$ as semi-permeable membranes with permeability changing along the boundary; permeability from $\Omega_i$ to $\Omega_j$ and that from $\Omega_j$ and $\Omega_i$ may also differ.  Moreover, $\beta: \Omega_0 \to \R$ is a bounded function which is zero on $\Omega_1\cup\Omega_2$, $u^\sharp$ is a positive constant and $u^+ = \max (u, 0)$.  Thus, the non-linearity is a production term: if $u(t)$ is smaller than $u^\sharp$ at a point of the large pool $\Omega_3$, new neurotransmitters are produced there. Certainly, \eqref{biekal} is an equation of the form \eqref{slin} with  $A=\mathscr L$ and \[ F(t,u) =F(u) = \beta (u^\sharp - u)^+, \qquad u \in L^2(\Omega_0). \] 

In \cite{zmarkusem}, we studied the linearized version of \eqref{biekal} where the nonlinearity $\beta (u^\sharp - u(t))^+$ was replaced by $\beta (u^\sharp - u(t))$. As a particular case of a more general principle of approximating fast diffusions by Markov chains, we proved there that if instead of $\mathscr{L}$ we consider the operator $\mathscr{L}_\kappa$, obtained by replacing the matrix $\mathbb D$ of diffusion coefficients of the operator $\mathscr{L}$ by $\kappa \mathbb D$, the associated contraction semigroups $(\e^{t\mathscr{L}_\kappa})$ converge irregularly, as $\kappa \to \infty$:
\[ \lim_{\kappa \to \infty} \e^{t\mathscr L_\kappa} =\Phi \e^{t Q} \Phi^{-1} P , \qquad t >0.\]
Here $P$ given by
\begin{equation} \label{nowy} Pu = \sum_{i=1}^3 \frac 1{\lam (\Omega_i)} \int_{\Omega_i} u \ud \lam \, \mathds 1_{\Omega_i} , \qquad u \in L^2(\Omega_0), \end{equation}
is the orthogonal projection on the space $\x_0$ of functions $u\in L^2(\Omega_0)$ which are constant on the sets 
$\Omega_1$, $\Omega_2$ and $\Omega_3$, $\lam $ denotes the Lebesgue measure,  and $\Phi$ is the isometric isomorphism between $\C^3$, equipped with the norm 
\[ \|(z_1,z_2,z_3) \|= \sqrt {\sum_{i=1}^3 |z_i|^2 \lam (\Omega_i)}, \]
and $\x_0$, given by
\[
\Phi (u_1, u_2, u_3) = \sum_{j=1}^3 u_j \mathds{1}_{\Omega_j}, \quad (u_1, u_2, u_3) \in \C^3.
\]
Finally, $Q$ is a certain intensity (or: Kolmogorov) matrix, describing a Markov chain with three states corresponding to three aggregated regions $\Omega_1, \Omega_2,\Omega_3$.

As discussed in  
\cite{bobmor} (see also \cite{zmarkusem}, Section 7.2), $Q$, whose entries are obtained by integrating permeability coefficients over the separating membranes,  may be identified with the matrix governing the ODE model of Aristizabal and Glavinovi\v{c}.

Making use of our main theorem, we can now tackle the original equation \eqref{biekal}. Indeed, since $\beta$ is assumed to be bounded, say by a constant $M$, we have 
\[ \| F(u) - F(v) \|_{L^2(\Omega_0)} \le M \|u - v\|_{L^2(\Omega_0)}\]
for any $u,v \in {L^2(\Omega_0)}$.  This means that the non-linear term is Lipschitz continuous. It follows from Theorem \ref{main} that, as $\kappa $ converges to infinity, solutions to \eqref{biekal} (with $\mathscr L$ replaced by $\mathscr L_\kappa$)  converge to an $\x_0$-valued function, which via $\Phi$ defined above may be identified with the $\C^3$-valued solution $u(t)$ of the equation 
\begin{equation}\label{ag} u '(t) = Qu (t) + \Phi^{-1} PF(\Phi (u(t))).  \end{equation} 
 Because of \eqref{nowy}, since $\beta$ is identically zero on $\Omega_1$ and $\Omega_2$, we see that $\Phi^{-1}PF \Phi $ maps a vector $(u_1,u_2,u_3)\in \C^3$ to 
$(0,0,v_3)$ where 
\[ v_3 = \frac 1{\lam (\Omega_3)} \int_{\Omega_3} \beta \ud \lam \, (u^\sharp - u_3)^+ .\]
Therefore, also the non-linearity in the limit equation is a production term: the third coordinate of a solution of \eqref{ag} grows if it happens to be below $u^\sharp$, and the rate of growth is 
 \( \frac 1{\lam (\Omega_3)} \int_{\Omega_3} \beta \ud \lam\).  


\vspace{0.2cm}
\textbf {Acknowledgment.}  This research is supported by National Science Center (Poland) grant
2017/25/B/ST1/01804.


\def\cprime{$'$}\def\cprime{$'$}\def\cprime{$'$}\def\polhk#1{\setbox0=\hbox{#1}{\ooalign{\hidewidth
  \lower1.5ex\hbox{`}\hidewidth\crcr\unhbox0}}}\def\polhk#1{\setbox0=\hbox{#1}{\ooalign{\hidewidth
  \lower1.5ex\hbox{`}\hidewidth\crcr\unhbox0}}}
\begin{bibdiv}
\begin{biblist}

\bib{adams}{book}{
      author={Adams, R.~A.},
      author={Fournier, J. J.~F.},
       title={Sobolev {S}paces},
     edition={Second},
      series={Pure and Applied Mathematics (Amsterdam)},
   publisher={Elsevier/Academic Press, Amsterdam},
        date={2003},
      volume={140},
        ISBN={0-12-044143-8},
}

\bib{alabi2012}{article}{
      author={Alabi, A.~A.},
      author={Tsien, R.W.},
       title={Synaptic vesicle pools and dynamics},
        date={2012},
     journal={Cold Spring Harbor Perspectives in Biology},
      volume={4},
       pages={a013680},
}

\bib{ag}{article}{
      author={Aristizabal, F.},
      author={Glavinovic, M.~I.},
       title={Simulation and parameter estimation of dynamics of synaptic
  depression},
        date={2004},
     journal={Biol. Cybern.},
      volume={90},
       pages={3\ndash 18},
}

\bib{abielecki}{article}{
      author={Bielecki, A.},
       title={Une remarque sur la m\'ethode de
  {B}anach--{C}acciopo\-li--{T}ikhonov},
        date={1956},
     journal={Bull. Polish Acad. Sci.},
      volume={4},
       pages={261\ndash 268},
}

\bib{bielecki}{article}{
      author={Bielecki, A.},
      author={Kalita, P.},
       title={Model of neurotransmitter fast transport in axon terminal of
  presynaptic neuron},
        date={2008},
     journal={J. Math. Biol},
      volume={56},
       pages={559\ndash 576},
}

\bib{deg}{article}{
      author={Bobrowski, A.},
       title={Degenerate convergence of semigroups},
        date={1994},
     journal={Semigroup Forum},
      volume={49},
      number={3},
       pages={303\ndash 327},
}

\bib{note}{article}{
      author={Bobrowski, A.},
       title={A note on convergence of semigroups},
        date={1998},
     journal={Ann. Polon. Math.},
      volume={69},
      number={2},
       pages={107\ndash 127},
}

\bib{nagrafach}{article}{
      author={Bobrowski, A.},
       title={From diffusions on graphs to {M}arkov chains via asymptotic state
  lumping},
        date={2012},
     journal={Ann. Henri Poincare},
      volume={13},
       pages={1501\ndash 1510},
}

\bib{osobliwiec}{article}{
      author={Bobrowski, A.},
       title={Singular perturbations involving fast diffusion},
        date={2015},
        ISSN={0022-247X},
     journal={J. Math. Anal. Appl.},
      volume={427},
      number={2},
       pages={1004\ndash 1026},
  url={http://www.sciencedirect.com/science/article/pii/S0022247X15001419},
}

\bib{knigaz}{book}{
      author={Bobrowski, A.},
       title={Convergence of {O}ne-parameter {O}perator {S}emigroups. {I}n
  {M}odels of {M}athematical {B}iology and {E}lsewhere},
      series={New Mathematical Monographs},
   publisher={Cambridge University Press, Cambridge},
        date={2016},
      volume={30},
        ISBN={978-1-107-13743-1},
         url={http://dx.doi.org/10.1017/CBO9781316480663},
}

\bib{zmarkusem}{article}{
      author={Bobrowski, A.},
      author={Ka\'zmierczak, B.},
      author={Kunze, M.},
       title={An averaging principle for fast diffusions in domains separated
  by semi-permeable membranes},
        date={2017},
     journal={Mathematical Models and Methods in Applied Sciences},
      volume={27},
      number={04},
       pages={663\ndash 706},
  eprint={http://www.worldscientific.com/doi/pdf/10.1142/S0218202517500130},
  url={http://www.worldscientific.com/doi/abs/10.1142/S0218202517500130},
}

\bib{dlajima}{incollection}{
      author={Bobrowski, A.},
      author={Lipniacki, T.},
       title={Singular limit of diffusion equations in 3{D} domains with
  thickness converging to zero},
        date={2017},
   booktitle={Models and {R}eality: {F}estschrift {F}or {J}ames {R}obert
  {T}hompson, edited by {J}.{A}. {D}obelman},
   publisher={Chicago, IL: TNO Company},
       pages={95\ndash 116},
}

\bib{thin}{misc}{
      author={Bobrowski, A.},
      author={Lipniacki, T.},
       title={Robin-type boundary conditions in transition from
  reaction-diffusion equations in 3{D} domains to equations in 2{D} domains},
        date={2018},
}

\bib{bobmor}{article}{
      author={Bobrowski, A.},
      author={Morawska, K.},
       title={From a {PDE} model to an {ODE} model of dynamics of synaptic
  depression},
        date={2012},
     journal={Discr. Cont. Dyn. Syst. B},
      volume={17},
      number={7},
       pages={2313\ndash 2327},
}

\bib{b97}{article}{
      author={Brze\'zniak, Z.},
       title={On stochastic convolution in {B}anach spaces and applications},
        date={1997},
        ISSN={1045-1129},
     journal={Stochastics Stochastics Rep.},
      volume={61},
      number={3-4},
       pages={245\ndash 295},
         url={http://dx.doi.org/10.1080/17442509708834122},
}

\bib{edwards}{book}{
      author={Edwards, R.~E.},
       title={Functional {A}nalysis. {T}heory and {A}pplications},
   publisher={Dover Publications},
        date={1995},
}

\bib{engel}{book}{
      author={Engel, K.-J.},
      author={Nagel, R.},
       title={One-{P}arameter {S}emigroups for {L}inear {E}volution
  {E}quations},
   publisher={Springer},
     address={New York},
        date={2000},
}

\bib{ethier}{book}{
      author={Ethier, S.~N.},
      author={Kurtz, T.~G.},
       title={{M}arkov {P}rocesses. {C}haracterization and {C}onvergence},
   publisher={Wiley},
     address={New York},
        date={1986},
}

\bib{goldstein}{book}{
      author={Goldstein, J.~A.},
       title={Semigroups of {L}inear {O}perators and {A}pplications},
   publisher={Oxford University Press},
     address={New York},
        date={1985},
}

\bib{hat2011}{article}{
      author={Hat, B.},
      author={Ka\'zmierczak, B.},
      author={Lipniacki, T.},
       title={B cell activation triggered by the formation of the small
  receptor cluster: a computational study},
        date={2011 Oct.},
     journal={PLoS Comput Biol.},
      volume={7(10)},
       pages={e1002197},
}

\bib{hp}{book}{
      author={Hille, E.},
      author={Phillips, R.~S.},
       title={{F}unctional {A}nalysis and {S}emi-{G}roups},
      series={Amer. Math. Soc. Colloq. Publ. 31},
   publisher={Amer. Math. Soc.},
     address={Providence, R. I.},
        date={1957},
}

\bib{keener}{article}{
      author={Keener, J.~P.},
       title={Activators and inhibitors in pattern formation},
        date={1978},
        ISSN={0022-2526},
     journal={Stud. Appl. Math.},
      volume={59},
      number={1},
       pages={1\ndash 23},
}

\bib{kvn11}{article}{
      author={Kunze, M.},
      author={van Neerven, J.},
       title={Approximating the coefficients in semilinear stochastic partial
  differential equations},
        date={2011},
        ISSN={1424-3199},
     journal={J. Evol. Equ.},
      volume={11},
      number={3},
       pages={577\ndash 604},
         url={http://dx.doi.org/10.1007/s00028-011-0102-6},
}

\bib{kurtzper}{article}{
      author={Kurtz, T.~G.},
       title={A limit theorem for perturbed operator semigroups with
  applications to random evolutions},
        date={1973},
     journal={J. Functional Analysis},
      volume={12},
       pages={55\ndash 67},
}

\bib{kurtzapp}{article}{
      author={Kurtz, T.~G.},
       title={Applications of an abstract perturbation theorem to ordinary
  differential equations},
        date={1977},
     journal={Houston J. Math.},
      volume={3},
      number={1},
       pages={67\ndash 82},
}

\bib{nowaaniaprim}{article}{
      author={Marciniak-Czochra, A.},
      author={H\"{a}rting, S.},
      author={Karch, G.},
      author={Suzuki, K.},
       title={Dynamical spike solutions in a nonlocal model of pattern
  formation},
        date={2018},
        ISSN={0951-7715},
     journal={Nonlinearity},
      volume={31},
      number={5},
       pages={1757\ndash 1781},
         url={https://doi.org/10.1088/1361-6544/aaa5dc},
}

\bib{markim1}{article}{
      author={Marciniak-Czochra, A.},
      author={Kimmel, M.},
       title={Modelling of early lung cancer progression: influence of growth
  factor production and cooperation between partially transformed cells},
        date={2007},
     journal={Math. Mod. Meth. Appl. Sci.},
      volume={17},
       pages={1693\ndash 1719},
}

\bib{markim2}{article}{
      author={Marciniak-Czochra, A.},
      author={Kimmel, M.},
       title={Reaction-diffusion model of early carcinogenesis: the effects of
  influx of mutated cells},
        date={2008},
     journal={Math. Model. Nat. Phenom.},
      volume={3},
      number={7},
       pages={90\ndash 114},
}

\bib{mpr10}{article}{
      author={Marinelli, C.},
      author={Pr\'ev\^ot, C.},
      author={R\"ockner, M.},
       title={Regular dependence on initial data for stochastic evolution
  equations with multiplicative {P}oisson noise},
        date={2010},
        ISSN={0022-1236},
     journal={J. Funct. Anal.},
      volume={258},
      number={2},
       pages={616\ndash 649},
         url={http://dx.doi.org/10.1016/j.jfa.2009.04.015},
}

\bib{ouhabaz}{book}{
      author={Ouhabaz, E.~M.},
       title={Analysis of {H}eat {E}quations on {D}omains},
      series={Lond.\ Math.\ Soc.\ Monograph Series},
   publisher={Princeton Univ.\ Press},
     address={Princeton},
        date={2005},
      volume={30},
}

\bib{pazy}{book}{
      author={Pazy, A.},
       title={Semigroups of {L}inear {O}perators and {A}pplications to
  {P}artial {D}ifferential {E}quations},
   publisher={Springer},
        date={1983},
}

\bib{rothe}{book}{
      author={Rothe, F.},
       title={Global {S}olutions of {R}eaction-{D}iffusion {S}ystems},
      series={Lecture Notes in Mathematics},
   publisher={Springer-Verlag},
     address={Berlin},
        date={1984},
      volume={1072},
        ISBN={3-540-13365-8},
}

\bib{simon}{article}{
      author={Simon, B.},
       title={A canonical decomposition for quadratic forms with applications
  to monotone convergence theorems},
        date={1978},
        ISSN={0022-1236},
     journal={J. Functional Analysis},
      volume={28},
      number={3},
       pages={377\ndash 385},
  url={http://www.sciencedirect.com/science/article/pii/0022123678900940},
}

\bib{smoller}{book}{
      author={Smoller, J.~A.},
       title={Shock-{W}aves and {R}eaction-{D}iffusion {E}quations},
   publisher={Springer},
        date={1994},
}

\bib{winkler}{article}{
      author={Winkler, M.},
       title={Aggregation vs. global diffusive behavior in the
  higher-dimensional {K}eller-{S}egel model},
        date={2010},
        ISSN={0022-0396},
     journal={J. Differential Equations},
      volume={248},
      number={12},
       pages={2889\ndash 2905},
         url={http://dx.doi.org/10.1016/j.jde.2010.02.008},
}

\end{biblist}
\end{bibdiv}

\end{document}